\documentclass[a4paper,oneside,reqno]{amsart}


\usepackage[utf8]{inputenc}
\usepackage{textgreek}
\usepackage[colorlinks,unicode]{hyperref}
\usepackage{xspace}
\usepackage{float}
\usepackage{appendix}
\usepackage{tikz}
\usepackage{pgfplots}\pgfplotsset{compat=1.18}
\usepackage{amsfonts}
\usepackage{amssymb}
\usepackage{mathtools}
\usepackage{braket}
\usepackage{mathtools}
\usepackage{siunitx}
\usepackage{enumitem}
\usepackage{algorithm}
\usepackage[noend]{algpseudocode}
\usepackage{array}
\usepackage{booktabs}
\usepackage{cleveref}
\usepackage{comment}
\usepackage{orcidlink}
\usepackage{doi}
\usepackage{color}

\hypersetup{
    linkcolor=blue
    ,citecolor=blue
    ,urlcolor=blue
}

\usetikzlibrary{cd}

\setlist[enumerate,1]{label={(\arabic*)}}


\makeatletter
\newcounter{algorithmicH}
\let\oldalgorithmic\algorithmic
\renewcommand{\algorithmic}{%
    \stepcounter{algorithmicH}
    \oldalgorithmic}
\renewcommand{\theHALG@line}{ALG@line.\thealgorithmicH.\arabic{ALG@line}}
\makeatother

\allowdisplaybreaks

\theoremstyle{plain}
\newtheorem{thm}{Theorem}[section]
\newtheorem{lem}[thm]{Lemma}
\newtheorem{cor}[thm]{Corollary}
\newtheorem{prop}[thm]{Proposition}

\newtheorem{defn}[thm]{Definition}
\theoremstyle{definition}
\newtheorem{rem}[thm]{Remark}

\theoremstyle{plain}

\crefname{thm}{theorem}{theorems}
\Crefname{thm}{Theorem}{Theorems}
\crefname{defn}{definition}{definitions}
\Crefname{defn}{Definition}{Definitions}
\crefname{prop}{proposition}{propositions}
\Crefname{prop}{Proposition}{Propositions}
\crefname{lem}{lemma}{lemmas}
\Crefname{lem}{Lemma}{Lemmas}
\crefname{cor}{corollary}{corollaries}
\Crefname{cor}{Corollary}{Corollaries}
\crefname{ex}{example}{examples}
\Crefname{ex}{Example}{Examples}
\crefname{rem}{remark}{remarks}
\Crefname{rem}{Remark}{Remarks}
\crefname{hyp}{hypothesis}{hypotheses}
\Crefname{hyp}{Hypothesis}{Hypotheses}

\makeatletter
\newcommand{\subalign}[1]{%
    \vcenter{%
        \Let@ \restore@math@cr \default@tag
        \baselineskip\fontdimen10 \scriptfont\tw@
        \advance\baselineskip\fontdimen12 \scriptfont\tw@
        \lineskip\thr@@\fontdimen8 \scriptfont\thr@@
        \lineskiplimit\lineskip
        \ialign{\hfil$\m@th\scriptstyle##$&$\m@th\scriptstyle{}##$\hfil\crcr
            #1\crcr
        }%
    }%
}
\makeatother


\newcommand{\F}{\mathbb{F}}                                         
\newcommand{\Fp}{\F_{p}}                                            
\newcommand{\Fq}{\F_{q}}                                            
\newcommand{\Fpn}{\Fp^{n}}                                          
\newcommand{\Fqn}{\Fq^{n}}                                          
\newcommand{\Fqnxn}{\Fq^{n \times n}}                               
\newcommand{\Fqm}{\Fq^{m}}                                          
\newcommand{\Fqx}{\Fq^\times}                                       

\DeclareMathOperator{\Hom}{Hom}                                     
\DeclareMathOperator{\Tr}{Tr}                                       
\ifdefined\widebar
\else
    \newcommand\widebar[1]{\mathop{\overline{#1}}}                  
\fi

\newcommand{\AffSp}[2]{\mathbb{A}^{#1}_{#2}}                        
\newcommand{\ProjSp}[2]{\mathbb{P}^{#1}_{#2}}                       

\newcommand{\homog}{\text{\normalfont hom}}                         



\DeclareMathOperator{\prob}{\mathbb{P}}                             

\newcommand*{\degree}[1]{\deg \left( #1 \right)}                    

\DeclareMathOperator{\Spec}{Spec}                                   

\DeclareMathOperator{\wt}{wt}                                       


\newcommand{\Anemoi}{\texttt{Anemoi}\xspace}

\newcommand{\Arion}{\textsf{Arion}\xspace}

\newcommand{\Ciminion}{\texttt{Ciminion}\xspace}


\newcommand{\GMiMC}{\texttt{GMiMC}\xspace}

\newcommand{\Griffin}{\textsc{Griffin}\xspace}

\newcommand{\Hades}{\textsc{Hades}\xspace}


\newcommand{\LowMC}{\texttt{LowMC}\xspace}

\newcommand{\MiMC}{\texttt{MiMC}\xspace}



\newcommand{\Poseidon}{\textsc{Poseidon}\xspace}
\newcommand{\Poseidontwo}{\textsc{Poseidon}2\xspace}


\newcommand{\ReinforcedConcrete}{\texttt{Reinforced Concrete}\xspace}

\begin{document}

    \title[]{A Degree Bound For The c-Boomerang Uniformity Of Permutation Monomials}

    \author[M.\ J.\ Steiner]{Matthias Johann Steiner}
    \address{Alpen-Adria-Universit\"at Klagenfurt, Universit\"atsstraße 65-67, 9020 Klagenfurt am W\"orthersee, Austria}
    \email{matthias.steiner@aau.at}

    \thanks{Matthias Steiner has been supported in part by the KWF under project number KWF-$3520 \vert 31870 \vert 45842$ and by the European Research Council (ERC) under the European Union's Horizon 2020 research and innovation program (grant agreement No.\ 725042).}

    \begin{abstract}
        Let $\mathbb{F}_q$ be a finite field of characteristic $p$.
        In this paper we prove that the $c$-Boomerang Uniformity, $c \neq 0$, for all permutation monomials $x^d$, where $d > 1$ and $p \nmid d$, is bounded by $d^2$.
        Further, we utilize this bound to estimate the $c$-boomerang uniformity of a large class of Generalized Triangular Dynamical Systems, a polynomial-based approach to describe cryptographic permutations, including the well-known Substitution-Permutation Network.
    \end{abstract}

    \subjclass[2020]{11T06, 14G50, 14H50, 94A60}
    \keywords{Finite fields, $c$-Differential uniformity, $c$-Boomerang uniformity, Power permutations, B\'ezout's theorem, Generalized triangular dynamical systems}

    \maketitle

    \section{Introduction}
    Differential cryptanalysis \cite{C:BihSha90} introduced by Biham \& Shamir is one of the most successful attack vectors against symmetric key ciphers and hash functions.
    It is based on the propagation of input-output differences through the rounds of an iterated construction.
    A generalization of the differential attack is the boomerang attack \cite{FSE:Wagner99} introduced by Wagner.
    In a boomerang attack one splits an iterated function into two parts and then combines two differentials for the upper and lower part of the construction.
    Cid et al.\ introduced the \emph{Boomerang Connectivity Table} (BCT) \cite{EC:CHPSS18}  analog to the \emph{Differential Distribution Table} (DDT) to quantify the vulnerability of a construction against boomerang attacks.
    Utilizing the notion of the BCT Boura \& Canteaut introduced the \emph{boomerang uniformity} \cite{ToSC:BouCan18}, that is the maximum entry of the BCT for non-trivial input-output differences, analog to the differential uniformity to express the susceptibility of S-boxes against boomerang attacks.
    Finally, Ellingsen et al.\ generalized the DDT by considering $c$-scaled differences \cite{Ellingsen-cDifferential} and St\u{a}nic\u{a} utilized the same approach to generalize the BCT to $c$-scaled differences \cite{Stanica-cBoomerang}.
    Articles in this line of research usually come with tons of estimations for the $c$-differential and $c$-boomerang uniformity of various permutations.
    In a simplifying manner we classify these permutations into the following categories:
    \begin{itemize}
        \item Variants of the inverse permutation $x^{-1}$.

        \item Quadratic permutations over binary fields.

        \item Special permutations, e.g.\ Gold functions.

        \item Special permutations over special fields.
    \end{itemize}
    For an excerpt of these investigations we refer to \cite{Mesnager-Boomerang,ToSC:BouCan18,Hasan-Boomerang,Stanica-Characterization,Stanica-SwappedInverse,Stanica-WeilSums}, also recently Mesnager et al.\ \cite{Mesnager-Survey} performed a survey on trends in generalized differential and boomerang cryptanalysis.

    Unfortunately, these bounds for the $c$-BCT are unsuitable to evaluate the resistance of ciphers and hash functions needed for the efficient implementation of \emph{Multi-Party Computation} (MPC) and \emph{Zero-Knowledge} (ZK) proof systems.
    Designs for MPC \& ZK have a different performance measure than classical bit based constructions.
    Most MPC \& ZK protocols are defined over a prime field $\Fp$, where $p \geq 2^{60}$, and for efficient implementation ciphers and hash functions should be native in $\Fp$.
    Moreover, they require a low number of multiplications for evaluation.
    In the literature, such designs are also known as \emph{Arithmetization-Oriented} (AO) designs.
    Examples for AO ciphers and hash functions are \LowMC \cite{EC:ARSTZ15}, \MiMC \cite{AC:AGRRT16}, \GMiMC \cite{ESORICS:AGPRRRRS19}, \Hades \cite{EC:GLRRS20}, \Poseidon \cite{USENIX:GKRRS21}, \Poseidontwo \cite{Poseidon2}, \Ciminion \cite{EC:DGGK21}, \ReinforcedConcrete \cite{CCS:GKLRSW22}, \Griffin \cite{Griffin}, \Anemoi \cite{Anemoi} and \Arion \cite{Arion}.
    At round level, most of these primitives utilize low degree power permutations $x^d$, where $d \in \{ 3, 5, 7 \}$.
    Surprisingly, the boomerang uniformity of $x^d$ over large prime fields $\Fp$ is unknown yet.

    Suppose we are given a univariate cipher, like \MiMC, over $\Fp$ that applies the power permutation $x^d$ in every round, i.e.\
    \begin{equation}
        \begin{split}
            \mathcal{R}^{(i)} (x, k) &= (x + k + c_i )^d, \\
            \mathcal{C} (x, k) &= \mathcal{R}^{(r)} \circ \cdots \circ \mathcal{R}^{(1)} (x, k) + k,
        \end{split}
    \end{equation}
    where $k$ denotes the key and composition is taken with respect to the plaintext variable $x$.
    A boomerang differential covering two rounds of $\mathcal{C}$ is the connection of a differential of $\mathcal{R}^{(i)} (x, k)$ and a differential of ${\mathcal{R}^{(i + 1)}}^{-1} (x, k)$.
    It is well-known that a permutation and its inverse share the same differential uniformity, see \cite[Proposition~1]{EC:Nyberg93}, also over $\Fp$ it is easy to see that the differential uniformity of $x^d$ is bounded by $d$.
    So by intuition we expect that the boomerang uniformity of two rounds of $\mathcal{C}$ is bounded by $d^2$.
    In this paper we provide a formal proof for this intuition, in particular we prove the following bound for the $c$-boomerang uniformity of power permutations.
    \begin{thm}[\Cref{Cor: boomerang uniformity of monomials}]
        Let $\Fq$ be a finite field of characteristic $p$, let $c \in \Fqx$, and let $d \in \mathbb{Z}_{> 1}$ be such that $\gcd (d, q - 1) = 1$ and $p \nmid d$.
        Then
        \begin{enumerate}
            \item $\beta_{x^d, c} \leq d^2$.

            \item $\beta_{x^\frac{1}{d}, c} \leq d^2$.
        \end{enumerate}
    \end{thm}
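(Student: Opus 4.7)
The plan is to recast the $c$-boomerang uniformity at a nontrivial pair $(a, b) \in \Fqx \times \Fqx$ as the number of $\Fq$-solutions of a system of two polynomial equations in two unknowns, each of total degree $d$, and then to apply B\'ezout's theorem in $\mathbb{P}^2_{\bar{\Fq}}$.

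\emph{Part (1).} Unfolding the $c$-BCT definition for $F(x) = x^d$ produces a system of the shape
\[
f_1(x, y) = x^d - c y^d - b = 0, \qquad f_2(x, y) = (x + a)^d - c(y + a)^d - c b = 0
\]
(up to the precise sign/scaling conventions of the $c$-BCT in use). First I would show that $f_1$ is absolutely irreducible: its homogenization $\widetilde{f}_1 = X^d - c Y^d - b Z^d$ has Jacobian $(d X^{d-1}, -c d Y^{d-1}, -b d Z^{d-1})$, which vanishes only at $(0, 0, 0) \notin \mathbb{P}^2$ since $p \nmid d$ and $b, c \neq 0$; hence the projective curve $\{\widetilde{f}_1 = 0\}$ is smooth, and a smooth plane hypersurface is necessarily cut out by an absolutely irreducible polynomial. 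Second, expanding binomials shows that $\deg(f_2 - f_1) = d - 1$, since the top-degree part is $d a x^{d-1} - c d a y^{d-1}$, nonzero by $a \neq 0$ and $p \nmid d$. Because $f_1$ is irreducible of degree $d$ and cannot divide a polynomial of smaller degree, $\gcd(f_1, f_2) = 1$ in $\bar{\Fq}[x, y]$. B\'ezout's theorem then bounds the total number of intersection points (with multiplicity) of the projective closures by $d^2$, and in particular the number of affine $\Fq$-solutions is at most $d^2$.

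\emph{Part (2).} Since $x^{1/d} = x^e$ with $d e \equiv 1 \pmod{q - 1}$, a direct application of Part (1) would only give the useless bound $e^2$. Instead I would introduce the change of variables $u = F(x), v = F(y), U = F(x + a), V = F(y + a)$, which is well defined because $\gcd(d, q - 1) = 1$. Then $u^d = x, v^d = y$, so $U^d - u^d = a$ and $V^d - v^d = a$, while the two boomerang conditions become linear: $u - c v = b$ and $U - c V = c b$. Eliminating $u$ and $U$ leaves a system of two polynomials in $(v, V)$ of total degree $d$, namely $g_2 = V^d - v^d - a$ and $g_1 = (cV + cb)^d - (cv + b)^d - a$. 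The same smoothness computation shows $g_2$ is absolutely irreducible, and $\deg(g_1 - c^d g_2) = d - 1$ because the degree-$d$ terms cancel after rescaling, so $g_1, g_2$ are coprime and B\'ezout once more yields the bound $d^2$.

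The step I expect to be the main obstacle is the absolute-irreducibility argument for $\widetilde{f}_1$ and its analogue in Part (2): the Jacobian calculation is exactly where the hypotheses $p \nmid d$ and $b, c \neq 0$ are used, and one must pass to the projective closure so that smoothness of the hypersurface forces its defining polynomial to be irreducible over $\bar{\Fq}$. Once coprimality of the pair is established and the degree drop of the difference is verified, the remaining B\'ezout bound is routine.
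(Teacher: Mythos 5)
Your proposal is correct and follows the same global strategy as the paper: recast the $c$-BCT entry at $(a,b)$ as the number of common $\Fq$-zeros of two affine plane curves of degree $d$, prove that one of them is absolutely irreducible, rule out proportionality of the two, and invoke B\'ezout in $\ProjSp{2}{\overline{\Fq}}$. The one genuinely different ingredient is the irreducibility step. You show that $X^d - cY^d - bZ^d$ defines a \emph{smooth} curve via the Jacobian criterion (the partials $dX^{d-1}, -cdY^{d-1}, -bdZ^{d-1}$ have no common projective zero precisely because $p \nmid d$ and $b, c \neq 0$) and then use that a smooth curve in $\ProjSp{2}{\overline{\Fq}}$ is irreducible, since two distinct components, or a multiple one, would produce a singular point by B\'ezout. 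The paper instead applies Eisenstein's criterion to $z^d - cx^d - b$ in $\overline{\Fq}[x][z]$ at the prime $(x - c_0)$, where $c_0$ is the unique $\Fq$-root of $cx^d + b$ (its Lemmas on the factorization of $x^d - a$ and on irreducibility). Your route is shorter and makes visible exactly where $p \nmid d$ enters; the paper's is more elementary and avoids the smooth-implies-irreducible fact. For part (2), your substitution $u = x^{1/d}$, $v = y^{1/d}$, $U = (x+a)^{1/d}$, $V = (y+a)^{1/d}$, which linearizes the boomerang conditions and turns the additive shifts into degree-$d$ relations, is in substance the same manipulation the paper isolates as its ``inverse relation'' lemma before feeding the result back into the main theorem; you correctly identify that a naive application of part (1) to $x^{1/d} = x^e$ would only give $e^2$.

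One caveat you should repair: your intermediate claims $\deg(f_2 - f_1) = d-1$ and $\deg(g_1 - c^d g_2) = d-1$ depend on your normalization, in which $c$ multiplies the \emph{same} argument in both boomerang equations. In the $c$-BCT actually used here the second equation is $cF(x+y+a) - F(x+a) = cb$, so the top-degree forms of the two polynomials are (up to your renaming) $z^d - cx^d$ and $cz^d - x^d$, and no scalar rescaling cancels them unless $c^2 = 1$; the stated degree drop fails. The conclusion is unaffected, because coprimality only needs the weaker observation that an irreducible $f_1$ of degree $d$ can share a factor with the degree-$d$ polynomial $f_2$ only if $f_2 = \alpha f_1$, and $f_2$ contains the monomial $z^{d-1}$ with coefficient $c\,d\,a \neq 0$ (here $p \nmid d$ and $a \neq 0$ are used again) while $f_1$ has no degree-$(d-1)$ terms at all --- which is exactly the paper's coprimality argument. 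So the fix is cosmetic, but the difference-of-degrees computation as written should not be relied on.
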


    St\u{a}nic\u{a} has shown that for any permutation $F$ over $\Fq$ the $c$-boomerang uniformity can be computed by finding the number of solutions of two differentials for $F$, see \cite[Theorem~4]{Stanica-Characterization}.
    We can view these differentials as two polynomials in two variables, so each polynomial defines a plane curve in $\ProjSp{2}{\Fq}$ after homogenization.
    Therefore, the number of joint solutions is simply upper bounded by the intersection number of these two plane curves.
    Provided these curves do not have any common irreducible components, then B\'ezout's theorem asserts that the intersection number is simply the product of the degrees of the curves, see \cite[Theorem~5.61]{Goertz-AlgGeom}.
    For the requirement of B\'ezout's theorem, we will prove that one of the curves is irreducible and that the other curve is not a multiple of the irreducible one.

    In the last part of the paper we will study $c$-uniformities of the \emph{Generalized Triangular Dynamical System} (GTDS) \cite{GTDS}.
    The GTDS is a generic polynomial framework that unifies the most prominent block cipher design strategies, being the Substitution-Permutation Network and the Feistel Network, in a single primitive.
    For large classes of GTDS we will prove bounds on the $c$-differential uniformity as well as the $c$-boomerang uniformity.

    \section{Preliminaries}
    Let $k$ be a field, then we denote with $\bar{k}$ the algebraic closure of $k$.
    We denote the multiplicative group of a field with $k^\times = k \setminus \{ 0 \}$.
    Elements of a vector space $\mathbf{a} \in k^n$ are denoted with small bold letters, and matrices $\mathbf{M} \in k^{m \times n}$ are denoted by capital bold letters.
    Moreover, we denote by $\mathbf{M} \mathbf{a}$ the matrix-vector product.

    Let $f \in k [x_1, \dots, x_n]$ be a polynomial, and let $x_0$ be an additional variable.
    We call
    \begin{equation}
        f^\homog (x_0, \dots, x_n) = x_0^{\degree{f}} \cdot f \left( \frac{x_1}{x_0}, \dots, \frac{x_n}{x_0} \right) \in k [x_0, \dots, x_n]
    \end{equation}
    the homogenization of $f$ with respect to $x_0$.

    By $\Fq$ we denote the finite field with $q$ elements.
    It is well-known that for $d \in \mathbb{Z}_{\geq 1}$ $x^d$ induces a permutation over $\Fq$ if and only if $\gcd (d, q - 1) = 1$, see \cite[7.8.~Theorem]{Niederreiter-FiniteFields}.
    If not specified otherwise we denote the inverse of $x^d$ by $x^\frac{1}{d}$, where $\frac{1}{d}$ represents the unique integer $1 \leq e < q - 1$ such that $e \cdot d \equiv 1 \mod (q - 1)$.

    \subsection{c-Differential Uniformity}
    Key quantity to measure the capabilities of differential cryptanalysis is the so-called differential uniformity \cite{EC:Nyberg93} of a function.
    Ellingsen et al.\ \cite{Ellingsen-cDifferential} generalized this concept by admitting $c$-scaled differences.
    \begin{defn}[{c-differential uniformity, \cite[Definition~1]{Ellingsen-cDifferential}}]
        Let $\Fq$ be a finite field, let $c \in \Fqx$, and let $F: \Fqn \to \Fqm$ be a function.
        \begin{enumerate}
            \item Let $\boldsymbol{\Delta x} \in \Fqn$ and $\boldsymbol{\Delta y} \in \Fqm$.
            The entry of the $c$-Differential Distribution Table ($c$-DDT) of $F$ at $(\boldsymbol{\Delta x}, \boldsymbol{\Delta y})$ is defined as
            \begin{equation*}
                {}_{c}\delta_F (\boldsymbol{\Delta x}, \boldsymbol{\Delta y}) = \left| \left\{ \mathbf{x} \in \Fqn \mid F (\mathbf{x} + \boldsymbol{\Delta x}) - c \cdot F (\mathbf{x}) = \boldsymbol{\Delta y} \right\} \right|.
            \end{equation*}

            \item The $c$-differential uniformity of $F$ is defined as
            \begin{equation*}
                {}_{c} \delta (F) = \max \left\{ {}_{c}\delta_F (\boldsymbol{\Delta x}, \boldsymbol{\Delta y}) \mid \boldsymbol{\Delta x} \in \Fqn,\ \boldsymbol{\Delta y} \in \Fqm,\ \boldsymbol{\Delta x} \neq \mathbf{0} \ \text{if} \ c = 1 \right\}.
            \end{equation*}
        \end{enumerate}
    \end{defn}

    Obviously, for $c = 1$ we obtain the usual notion of differential uniformity.
    We recall some properties of the $c$-differential uniformity.
    \begin{prop}[{\cite[Proposition~5]{Stanica-cBoomerang}}]\label[prop]{Prop: c-DDT inverse}
        Let $\Fq$ be a finite field, let $c \in \Fqx$, and let $d \in \mathbb{Z}_{> 1}$ be such that $\gcd (d, q - 1) = 1$.
        Then ${}_{c} \delta_{x^\frac{1}{d}} (\Delta x, \Delta y) = {}_{c^{-d}} \delta_{x^d} \Big( c^{-1} \cdot \Delta y, c^{-d} \cdot \Delta x \Big)$.
    \end{prop}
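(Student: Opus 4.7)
The plan is to count ${}_{c} \delta_{x^{1/d}}(\Delta x, \Delta y)$ directly from its definition by introducing the bijective substitution $y = x^{1/d}$, and then rewrite the resulting equation so that it matches the defining equation of a $c^{-d}$-DDT entry of $x^d$ at a transformed pair of differences.

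First I would unfold the definition: the quantity ${}_{c} \delta_{x^{1/d}}(\Delta x, \Delta y)$ counts the number of $x \in \Fq$ such that $(x + \Delta x)^{1/d} - c \cdot x^{1/d} = \Delta y$. Since $\gcd(d, q - 1) = 1$, the map $y \mapsto y^d$ is a bijection of $\Fq$ with inverse $y \mapsto y^{1/d}$, so substituting $x = y^d$ preserves the number of solutions and turns the equation into $(y^d + \Delta x)^{1/d} = c \cdot y + \Delta y$.

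Next I would apply $(\,\cdot\,)^d$ to both sides, again a bijection of $\Fq$, and factor $c^d$ out of the right-hand side. After multiplying through by $c^{-d}$ and rearranging, the equation takes the form
\[
(y + c^{-1} \Delta y)^d - c^{-d} \cdot y^d \;=\; c^{-d} \cdot \Delta x,
\]
which is precisely the defining equation whose solution count is ${}_{c^{-d}} \delta_{x^d}(c^{-1} \Delta y, c^{-d} \Delta x)$. Since each substitution was a bijection of $\Fq$ and therefore preserved solution counts, the two $c$-DDT entries agree, giving the claimed identity.

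There is no substantive obstacle in this argument; it reduces to two bijective substitutions and one linear rescaling. The only point that demands some care is keeping the compositional interpretation of $x^{1/d}$ on $\Fq$ distinct from a formal exponent, so that $((y^d + \Delta x)^{1/d})^d$ genuinely collapses to $y^d + \Delta x$ — this is exactly where the hypothesis $\gcd(d, q - 1) = 1$ enters, guaranteeing that $(\,\cdot\,)^d$ and $(\,\cdot\,)^{1/d}$ are mutually inverse bijections on $\Fq$.
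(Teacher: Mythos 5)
Your proof is correct. The paper itself does not prove this proposition --- it only cites it from St\u{a}nic\u{a}'s work --- so there is no internal argument to compare against; your derivation via the bijective substitution $x = y^d$, raising to the $d$-th power, and factoring out $c^d$ is the standard one and is complete. The one point you rightly flag, that $\bigl((y^d + \Delta x)^{1/d}\bigr)^d = y^d + \Delta x$ holds as a compositional identity on $\Fq$ because $\gcd(d, q-1) = 1$ makes $x \mapsto x^d$ and $x \mapsto x^{1/d}$ mutually inverse bijections, is handled correctly, and the final equation $(y + c^{-1}\Delta y)^d - c^{-d} y^d = c^{-d}\Delta x$ matches the defining equation of ${}_{c^{-d}}\delta_{x^d}\bigl(c^{-1}\Delta y,\, c^{-d}\Delta x\bigr)$ exactly.
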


    For power functions, calculating the $c$-differential uniformity is straight-forward when the characteristic does not divide the exponent.
    \begin{lem}
        Let $\Fq$ be a finite field of characteristic $p$, let $c \in \Fqx$, and let $d \in \mathbb{Z}_{> 1}$ be such that $p \nmid d$.
        Then ${}_{c} \delta \big( x^d \big) \leq d$.
    \end{lem}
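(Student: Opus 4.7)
The plan is to read off the $c$-differential uniformity directly from the degree of the univariate polynomial whose roots count the entries of the $c$-DDT. Specifically, fix $\Delta x, \Delta y \in \Fq$ (with $\Delta x \neq 0$ in case $c = 1$) and consider
\begin{equation*}
    g(x) = (x + \Delta x)^d - c \cdot x^d - \Delta y \in \Fq[x].
\end{equation*}
By definition, ${}_c\delta_{x^d}(\Delta x, \Delta y)$ is the number of $x \in \Fq$ with $g(x) = 0$, so it suffices to show that $g$ is a nonzero polynomial of degree at most $d$; the standard bound on roots of a univariate polynomial over a field then yields ${}_c\delta_{x^d}(\Delta x,\Delta y) \leq d$.

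To establish this I would split the analysis into two cases. First, if $c \neq 1$, the coefficient of $x^d$ in $g$ is $1 - c \neq 0$, so $\deg(g) = d$ and we are done. Second, if $c = 1$, the $x^d$ term cancels, so I would expand via the binomial theorem and look at the next coefficient: the coefficient of $x^{d-1}$ is $d \cdot \Delta x$. By hypothesis $p \nmid d$, and by the restriction in the definition of $c$-differential uniformity we also have $\Delta x \neq 0$ when $c = 1$; hence $d \cdot \Delta x \neq 0$ in $\Fq$, and $g$ has degree exactly $d - 1$. In either case $g$ is a nonzero polynomial of degree at most $d$ and thus has at most $d$ roots in $\Fq$.

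Taking the maximum over all admissible $(\Delta x, \Delta y)$ gives ${}_c\delta(x^d) \leq d$. There is no real obstacle here; the only subtle point is remembering that the hypothesis $p \nmid d$ is precisely what is needed to keep $d \cdot \Delta x$ from vanishing in the $c = 1$ case, which in turn is what prevents the polynomial $g$ from collapsing to lower degree (or even the zero polynomial) and breaking the bound.
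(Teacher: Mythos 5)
Your proposal is correct and follows essentially the same argument as the paper: split on $c \neq 1$ versus $c = 1$, observe that the counting polynomial has nonzero leading coefficient $1-c$ of degree $d$ in the first case and nonzero coefficient $d \cdot \Delta x$ in degree $d-1$ in the second (using $p \nmid d$ and $\Delta x \neq 0$), and bound the number of roots by the degree. No substantive differences.
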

    \begin{proof}
        Let $\Delta x \in \Fqx$ and $\Delta y \in \Fq$.
        For $c \neq 1$, $(x + \Delta x)^d - c \cdot x^d = \Delta y$ is a non-trivial polynomial of degree $d$, so it as $\leq d$ many roots in $\Fq$.
        For $c = 1$, $\sum_{i = 0}^{d - 1} \binom{d}{i} \cdot (\Delta x)^{d - i} \cdot x^i = \Delta y$ has the leading term $d \cdot \Delta x$ since $p \nmid d$, so it is a non-trivial polynomial of degree $d - 1$ which has lass than $d$ roots in $\Fq$.
    \end{proof}

    \subsection{c-Boomerang Uniformity}
    Now let us recall the definition of the $c$-boomer\-ang uniformity.
    \begin{defn}[{c-boomerang uniformity, \cite[Remark~1]{Stanica-cBoomerang}}]\label[defn]{Def: boomerang uniformity}
        Let $\Fq$ be a finite field, let $c \in \Fq^\times$, and let $F: \Fqn \to \Fqm$ be a function.
        \begin{enumerate}
            \item\label{Item: boomerang connectivity table} Let $\mathbf{a} \in \Fqn$ and $\mathbf{b} \in \Fqm$.
            The entry of the $c$-Boomerang Connectivity Table ($c$-BCT) of $F$ at $(\mathbf{a}, \mathbf{b})$ is defined as
            \begin{equation*}
                {}_{c}B_F (\mathbf{a}, \mathbf{b}) = \left| \left\{ (\mathbf{x}, \mathbf{y}) \in \Fqn \times \Fqn \; \middle\vert \;
                \begin{aligned}
                    F (\mathbf{x} + \mathbf{y}) &- c \cdot F (\mathbf{x}) = \mathbf{b} \\
                    c \cdot F (\mathbf{x} + \mathbf{y} + \mathbf{a}) &- F (\mathbf{x} + \mathbf{a}) = c \cdot \mathbf{b}
                \end{aligned}
                \right\}
                \right|.
            \end{equation*}

            \item The $c$-boomerang uniformity of $F$ is defined as
            \begin{equation*}
                \beta_{F, c} = \max_{\substack{\mathbf{a} \in \Fqn \setminus \{ \mathbf{0 }\}, \\ \mathbf{b} \in \Fqm \setminus \{ \mathbf{0} \}}} {}_{c}B_F (\mathbf{a}, \mathbf{b}).
            \end{equation*}
        \end{enumerate}
    \end{defn}
    \begin{rem}\label[rem]{Rem: original definition}
        Let $F: \Fpn \to \Fpn$ be a permutation.
        St\u{a}nic\u{a} originally defined the $c$-BCT only for permutations \cite[Definition~3]{Stanica-cBoomerang}
        \begin{equation}\label{Equ: boomerang original definition}
            {}_{c}B_F (\mathbf{a}, \mathbf{b}) = \left| \left\{ \mathbf{x} \in \Fqn \; \middle\vert \; F^{-1} \left( c^{-1} \cdot F (\mathbf{x} + \mathbf{a}) + \mathbf{b} \right) - F^{-1} \big( c \cdot F (\mathbf{x}) + \mathbf{b} \big) = \mathbf{a} \right\} \right|.
        \end{equation}
        For $c = 1$ this coincides with the original BCT definition \cite[Definition~3.1]{EC:CHPSS18} over $\F_{2^n}$.
        Moreover, \Cref{Def: boomerang uniformity} \ref{Item: boomerang connectivity table} and \Cref{Equ: boomerang original definition} are equivalent for permutations \cite[Theorem~4]{Stanica-cBoomerang} but \Cref{Def: boomerang uniformity} \ref{Item: boomerang connectivity table} is more general since it can also be applied to non-permutations.
    \end{rem}

    Ideally, we would like to have a similar relationship for the $c$-BCT of $x^d$ and $x^\frac{1}{d}$ as in \Cref{Prop: c-DDT inverse} for the $c$-DDT.
    After some algebraic manipulations we indeed can derive two differential equations for $x^\frac{1}{d}$ starting from the $c$-boomerang equations for $x^d$, though only for $c = 1$ we again obtain a boomerang equation system.
    \begin{lem}\label[lem]{Lem: inverse relation}
        Let $\Fq$ be a finite field, let $c \in \Fqx$, and let $d \in \mathbb{Z}_{> 1}$ be such that $\gcd (d, q - 1) \allowbreak = 1$.
        Then
        \begin{equation*}
            {}_{c} \mathcal{B}_{x^d} (a, b) = \left| \left\{ (x, y) \in \Fq^2 \; \middle\vert \;
            \begin{aligned}
                (x + y)^\frac{1}{d} &- c^{-\frac{1}{d}} \cdot x^\frac{1}{d} = c^{-\frac{1}{d}} \cdot a \\
                c^{-\frac{1}{d}} \cdot (x + y + b)^\frac{1}{d} &- \left( x + \frac{b}{c} \right)^\frac{1}{d} = c^{-\frac{1}{d}} \cdot a
            \end{aligned}
            \right\} \right|.
        \end{equation*}
    \end{lem}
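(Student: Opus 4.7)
Plan of proof. I would construct an explicit bijection between the two solution sets. Since $\gcd(d, q-1) = 1$, the map $t \mapsto t^d$ is a bijection of $\Fq$, so $d$-th roots may be taken freely. By definition, ${}_{c} \mathcal{B}_{x^d}(a, b)$ counts the pairs $(X, Y) \in \Fq^2$ satisfying $(X+Y)^d - c X^d = b$ and $c(X+Y+a)^d - (X+a)^d = cb$.

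Define $\phi(X, Y) := \bigl(X^d,\, c^{-1}(X+a)^d - X^d\bigr)$, so that the image variables satisfy $x = X^d$ and $x + y = c^{-1}(X+a)^d$. The first target equation then holds automatically: $(x+y)^{1/d} - c^{-1/d} x^{1/d} = c^{-1/d}(X+a) - c^{-1/d} X = c^{-1/d} a$. For the second, the two original boomerang equations rearrange to $x + b/c = c^{-1}(X+Y)^d$ and $x + y + b = (X+Y+a)^d$. Extracting $d$-th roots then reduces the left-hand side of the second target equation to $c^{-1/d}(X+Y+a) - c^{-1/d}(X+Y) = c^{-1/d} a$, as required.

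Conversely, define $\psi(x, y) := \bigl(x^{1/d},\, (cx+b)^{1/d} - x^{1/d}\bigr)$. The first original equation then follows at once: $(X+Y)^d - cX^d = (cx+b) - cx = b$. For the second, the first target equation gives $(X+a)^d = c(x+y)$, and the second target equation, after clearing the factor $c^{-1/d}$ and raising to the $d$-th power, gives $(X+Y+a)^d = x+y+b$; combining these yields $c(X+Y+a)^d - (X+a)^d = c(x+y+b) - c(x+y) = cb$.

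The compositions $\psi \circ \phi$ and $\phi \circ \psi$ restrict to the identity on the respective solution sets: the former sends $(X, Y)$ to $\bigl(X, (cX^d + b)^{1/d} - X\bigr) = (X, Y)$ by the first original equation, and the latter sends $(x, y)$ to $\bigl(x, c^{-1}(x^{1/d}+a)^d - x\bigr) = (x, y)$ by the first target equation. This yields the desired bijection and hence the equality of cardinalities. The only real obstacle is bookkeeping: $\phi$ and $\psi$ are not globally inverse on $\Fq^2$ (note that $\phi$ does not even depend on $Y$), so the bijection property has to be read off from the composition identities above, which hold only on the solution sets. The definition of $y$ in $\phi$ (resp.\ $Y$ in $\psi$) is chosen precisely so that the first equation of the target (resp.\ original) system holds automatically, leaving only the second equation to verify.
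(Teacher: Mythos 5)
Your proof is correct and follows essentially the same route as the paper: the paper transforms the $c$-boomerang system for $x^d$ into the stated system via a chain of bijective substitutions ($z = x+y$, taking $d$-th roots, $\hat{x} = x^d$, $\tilde{z} = (z+a)^d$, $\hat{z} = \tilde{z} - b$), and your maps $\phi$ and $\psi$ are exactly the composition of these substitutions and its inverse, written out as an explicit two-sided bijection between the solution sets. No substantive difference.
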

    \begin{proof}
        In the $c$-boomerang equations we first do the substitution $z = x + y$, then we rearrange the system
        \begin{align*}
            z^d &= c \cdot x^d + b, \\
            (x + a)^d &= c \cdot (z + a)^d - c \cdot b.
        \end{align*}
        Since $d$ induces a permutation over $\Fq$, for any $(x, z) \in \Fq^2$ this system of equations is equivalent to
        \begin{align*}
            z &= \left( c \cdot x^d + b \right)^\frac{1}{d}, \\
            x + a &= c^\frac{1}{d} \cdot \left( (z + a)^d - b \right)^\frac{1}{d}.
        \end{align*}
        Moreover, for $\Fq$-valued solutions we can perform the substitution $\hat{x} = x^\frac{1}{d}$ and $\tilde{z} = (z + a)^d$, this then yields the equivalent system of equations
        \begin{align*}
            \tilde{z}^\frac{1}{d} - a &= \left( c \cdot \hat{x} + b \right)^\frac{1}{d}, \\
            \hat{x}^\frac{1}{d} + a &= c^\frac{1}{d} \cdot \left( \tilde{z} - b \right)^\frac{1}{d}.
        \end{align*}
        Now we perform the substitution $\hat{z} = \tilde{z} - b$, divide the first equation by $c^{-\frac{1}{d}}$ and rearrange the equations
        \begin{align*}
            c^{-\frac{1}{d}} \cdot \left( \hat{z} +  b \right)^\frac{1}{d} - \left( \hat{x} + \frac{b}{c} \right)^\frac{1}{d} &= c^{-\frac{1}{d}} \cdot a, \\
            \hat{z}^\frac{1}{d} - c^{-\frac{1}{d}} \cdot \hat{x}^\frac{1}{d} &= c^{-\frac{1}{d}} \cdot a.
        \end{align*}
        After a final substitution $\hat{z} = \hat{x} + \hat{y}$ we obtain the claim.
    \end{proof}

    Let $\Fq$ be a finite field of characteristic $p$, an additive character of $\Fq$ is a map $\chi: \Fq \to \mathbb{C}$ such that $\chi (x + y) = \chi (x) \cdot \chi (y)$.
    Let $\Tr_{\Fq / \Fp}: \Fqn \to \Fp$ be the trace function, then
    \begin{equation}
        \chi_1 (x) = \exp \left( 2 \cdot i \cdot \pi \cdot \Tr_{\Fq / \Fp} (x) \right)
    \end{equation}
    is called the fundamental character of $\Fq$.
    It is well-known that any additive character $\chi$ of $\Fq$ is of the form $\chi (x) = \chi_1 (a \cdot x)$ for some $a \in \Fq$, see \cite[5.7.~Theorem]{Niederreiter-FiniteFields}.
    To the best of our knowledge, closest to an estimation of the $c$-boomerang uniformity of $x^d$ is the following characterization of the $c$-BCT in terms of character sums due to St\u{a}nic\u{a}.
    \begin{thm}[{\cite[Theorem~1]{Stanica-Characterization}}]
        Let $\Fq$ be a finite field, let $c \in \Fqx$, and let $d \in \mathbb{Z}_{\geq 1}$.
        For $a \in \Fqx$ and $b \in \Fq$ the $c$-Boomerang Connectivity Table entry ${}_{c} \mathcal{B}_{x^d} (a, a^d \cdot b)$ of $x^d$ is given by
        \begin{equation*}
            \frac{1}{q} \cdot \left( {}_{c} \delta_{x^d} (1, b) + {}_{c^{-1}} \delta_{x^d} (1, b) \right) + 1 + \frac{1}{q^2} \cdot \sum_{\substack{\alpha, \beta \in \Fq, \\ \alpha \cdot \beta \neq 0}} \chi_1 \big( -b \cdot (\alpha + \beta) \big) \cdot S_{\alpha, \beta} \cdot S_{-\alpha \cdot c, -\beta \cdot c^{-1}},
        \end{equation*}
        with
        \begin{equation*}
            S_{\alpha, \beta} = \sum_{x \in \Fq} \chi_1 \left( \alpha \cdot x^d \right) \cdot \chi_1 \left( \beta \cdot \left( x + 1 \right)^d \right).
        \end{equation*}
    \end{thm}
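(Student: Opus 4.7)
The plan is to compute the BCT entry via the orthogonality of additive characters: $\mathbb{1}[z = 0] = \frac{1}{q}\sum_{\alpha \in \Fq} \chi_1(\alpha z)$. Writing $N := {}_c \mathcal{B}_{x^d}(a, a^d b)$ as a sum over $(x, y) \in \Fq^2$ of the product of the two indicators defining the $c$-boomerang system and applying this inversion to each indicator, $N$ becomes a quadruple sum over $(x, y, \alpha, \beta)$. Swapping the order of summation and pulling the constant offsets outside yields
\begin{equation*}
    N = \frac{1}{q^2} \sum_{\alpha, \beta \in \Fq} \chi_1 \bigl( -a^d b (\alpha + c \beta) \bigr) \sum_{x, y \in \Fq} \chi_1 \bigl( \alpha [(x+y)^d - c x^d] + \beta [c(x+y+a)^d - (x+a)^d] \bigr).
\end{equation*}

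Next I would perform the bijective substitution $u = x+y$, $v = x$ on the inner double sum. Its exponent then separates as a function of $u$ plus a function of $v$, so the inner sum factors as a product of two one-variable character sums. The dilations $u = a u'$ and $v = a v'$ rescale these into $S$-sums as defined in the statement: the $u$-factor becomes $S_{a^d \alpha, \, a^d c \beta}$ and the $v$-factor becomes $S_{-a^d c \alpha, \, -a^d \beta}$. A change of variables $\alpha \mapsto a^d \alpha$, $\beta \mapsto a^d c \beta$ (both bijective on $\Fq$ since $a, c \in \Fqx$) absorbs the $a$-dependence completely and produces
\begin{equation*}
    N = \frac{1}{q^2} \sum_{\alpha, \beta \in \Fq} \chi_1(-b(\alpha + \beta)) \, S_{\alpha, \beta} \, S_{-\alpha c, \, -\beta c^{-1}}.
\end{equation*}

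Finally I would split this sum into the four cases $\alpha = \beta = 0$, $\alpha \neq 0 = \beta$, $\alpha = 0 \neq \beta$, and $\alpha \beta \neq 0$. The corner $\alpha = \beta = 0$ contributes $S_{0, 0}^2/q^2 = 1$, accounting for the explicit $+1$ in the formula; the case $\alpha \beta \neq 0$ is exactly the stated main sum. The hard part will be the two remaining cross-terms, where the $S$-sums collapse to pure Weil-type sums of the form $\sum_x \chi_1(\lambda x^d)$: one has to show that these boundary contributions combine to give $\tfrac{1}{q}\bigl({}_c \delta_{x^d}(1, b) + {}_{c^{-1}} \delta_{x^d}(1, b)\bigr)$. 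This identification is the most delicate step, because a naive orthogonality collapse of the product of two Weil sums only produces a two-variable count of the form $|\{(x_1, x_2) \in \Fq^2 : x_1^d - c x_2^d = b\}|$; one must arrange the remaining substitutions (in particular the relation ${}_c \delta_{x^d}(t, b) = {}_c \delta_{x^d}(1, b/t^d)$ for $t \neq 0$) so that the final sum matches the defining equation $(x+1)^d - c x^d = b$ of the $c$-DDT entry ${}_c \delta_{x^d}(1, b)$.
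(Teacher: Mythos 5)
The paper does not prove this statement at all --- it is imported verbatim from St\u{a}nic\u{a} \cite{Stanica-Characterization} as a quoted result --- so there is no in-paper proof to compare against; I am judging your proposal on its own. Your first two stages are correct: orthogonality of additive characters, the bijective substitution $u = x+y$, $v = x$, the dilations by $a$, and the rescaling $\alpha \mapsto a^d\alpha$, $\beta \mapsto a^d c\beta$ do yield the exact identity
\begin{equation*}
  {}_{c}B_{x^d}(a, a^d b) \;=\; \frac{1}{q^2}\sum_{\alpha,\beta\in\Fq}\chi_1\bigl(-b(\alpha+\beta)\bigr)\,S_{\alpha,\beta}\,S_{-\alpha c,\,-\beta c^{-1}},
\end{equation*}
and the corner $\alpha=\beta=0$ contributes exactly $1$ while the range $\alpha\beta\neq 0$ reproduces the displayed main sum.

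The gap is precisely at the step you flag as delicate, and it is not a matter of arranging substitutions --- the identification fails. The two cross-terms can be evaluated in closed form: since $S_{\alpha,0}=\sum_x\chi_1(\alpha x^d)$ and $S_{-\alpha c,0}=\sum_x\chi_1(-\alpha c x^d)$, summing over $\alpha\neq 0$ gives $\tfrac{1}{q}\,\bigl|\{(x_1,x_2)\in\Fq^2 : x_1^d - c\,x_2^d = b\}\bigr| - 1$, and the $\alpha=0\neq\beta$ term likewise gives $\tfrac{1}{q}\,\bigl|\{(x_1,x_2) : x_1^d - c^{-1}x_2^d = b\}\bigr| - 1$. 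These are counts of points on a surface in \emph{two independent} variables, whereas ${}_{c}\delta_{x^d}(1,b)$ counts solutions of the one-variable equation $(x+1)^d - c\,x^d = b$; the relation ${}_{c}\delta_{x^d}(t,b)={}_{c}\delta_{x^d}(1,b/t^d)$ only moves between one-variable counts and cannot bridge this. The mismatch is concrete: when $\gcd(d,q-1)=1$ one has $S_{\alpha,0}=S_{0,\beta}=0$ for $\alpha,\beta\neq 0$, so both cross-terms vanish identically, while ${}_{c}\delta_{x^d}(1,b)+{}_{c^{-1}}\delta_{x^d}(1,b)\geq 1$ in general. Hence the boundary of your decomposition does not supply the term $\tfrac{1}{q}\bigl({}_{c}\delta_{x^d}(1,b)+{}_{c^{-1}}\delta_{x^d}(1,b)\bigr)$; in St\u{a}nic\u{a}'s derivation the DDT entries enter through a different mechanism (the characterization is built from the inverse-function form of the BCT, and the degenerate values of the extra summation variables introduced to encode $F^{-1}$ via characters are what produce them), not from the $\alpha\beta=0$ locus of this particular double sum. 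I would add that your exact identity is worth keeping: combined with your boundary computation it shows the formula as transcribed here is in tension with a direct count (e.g.\ $q=5$, $d=3$, $c=2$, $a=b=1$ gives ${}_{2}B_{x^3}(1,1)=0$ while the displayed right-hand side evaluates to $4/5$ plus the main sum, which equals $-1$ by your identity), so either the statement needs a hypothesis or a correction, or the quantities in it are normalized differently in the source.
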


    Combined with \cite[Theorem~4]{Stanica-WeilSums} one could then compute entries of the $c$-BCT via character sums.
    Though, the AO designs mentioned in the introduction are all defined over prime fields $p \geq 2^{60}$, and over such fields these computations are simply infeasible in practice.

    Finally, let us compute the $c$-boomerang uniformity for a special class of polynomials.
    Over a finite field $\F_{p^n}$, a polynomial of the form
    \begin{equation*}
        f (x) = \sum_{i = 1}^{n - 1} a_i \cdot x^{p^i}
    \end{equation*}
    is called a \emph{linearized polynomial} since $f (x + y) = f (x) + f (y)$ for all $x, y \in \F_{p^n}$.
    \begin{lem}\label[lem]{Lem: boomerang uniformity of linearized polynomials}
        Let $\Fq$ be a finite field, let $c \in \Fqx$, and let $f \in \Fq [x]$ be a linearized permutation polynomial.
        Then
        \begin{equation*}
            {}_{c} \mathcal{B}_f (a, b) =
            \begin{cases}
                q, & c^2 = 1, \\
                1, & c^2 \neq 1.
            \end{cases}
        \end{equation*}
    \end{lem}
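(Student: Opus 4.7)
The plan is to exploit the additivity $f(u+v)=f(u)+f(v)$ of a linearized polynomial to collapse the $c$-boomerang system into a small linear system in the unknowns $f(x)$ and $f(y)$, and then read off the number of solutions from the rank of the coefficient matrix.

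First, substitute $f(x+y)=f(x)+f(y)$, $f(x+y+a)=f(x)+f(y)+f(a)$, and $f(x+a)=f(x)+f(a)$ into the two defining equations of the $c$-BCT from \Cref{Def: boomerang uniformity}. After trivial rearrangement this becomes the $2\times 2$ linear system
\begin{equation*}
\begin{pmatrix} 1-c & 1 \\ c-1 & c \end{pmatrix} \begin{pmatrix} f(x) \\ f(y) \end{pmatrix} = \begin{pmatrix} b \\ cb - (c-1)f(a) \end{pmatrix},
\end{equation*}
whose coefficient determinant computes to $c(1-c)-(c-1)=1-c^2$.

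If $c^2\neq 1$, the coefficient matrix is nonsingular, so $(f(x),f(y))$ is uniquely determined by Cramer's rule. Since $f$ is a permutation, this lifts to a unique $(x,y)\in\Fq^2$, establishing $_c B_f(a,b)=1$.

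If $c^2=1$, in particular when $c=1$ (the only possibility in characteristic two), both equations collapse to $f(y)=b$: then $y=f^{-1}(b)$ is forced while $x$ ranges freely over $\Fq$, producing $q$ solutions. The only place where care is needed is the singular case: one must verify that the second row of the system is a scalar multiple of the first rather than inconsistent with it, which reduces to checking the compatibility $(c-1)f(a)=0$ on the right-hand side; once this is in place, the $q$-parameter family of solutions drops out directly from additivity and the permutation property of $f$.
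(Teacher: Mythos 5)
Your linear-algebra route is genuinely different from the paper's proof, which works with the inverse-form definition of the $c$-BCT from \Cref{Rem: original definition} and uses additivity of $f^{-1}$ to collapse the single boomerang equation into a condition on $f(x)$ alone; your two-variable system taken directly from \Cref{Def: boomerang uniformity} is arguably cleaner since it avoids $f^{-1}$ altogether. The setup, the determinant $1-c^{2}$, the case $c^{2}\neq 1$, and the case $c=1$ are all correct. The genuine gap is exactly the point you defer to the end: when $c^{2}=1$ the left-hand sides satisfy $\mathrm{Row}_2=c\cdot\mathrm{Row}_1$, so consistency requires $(c-1)\cdot f(a)=0$, and for $c=-1$ in odd characteristic this reads $-2\cdot f(a)=0$, i.e.\ $f(a)=0$, i.e.\ $a=0$, because a linearized permutation polynomial is an additive bijection fixing $0$. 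For $a\neq 0$ (the only inputs that enter the boomerang uniformity) the system is therefore inconsistent and has $0$ solutions, not $q$. You cannot simply assert that the compatibility is ``in place''; checking it shows that your method, carried through, refutes the $c=-1$ branch of the claim rather than proving it.

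In other words, the asserted value $q$ for $c^{2}=1$ holds only in the subcase $c=1$; for $c=-1$ and $a\neq 0$ the correct count is $0$, which is consistent with the paper's own subsequent computation of ${}_{c}B_{x^d}(a,0)$, where the value $0$ is recorded for $c=-1$. The paper's proof has the same blind spot: its chain of equalities silently uses $f^{-1}\bigl(c^{-1}\cdot f(x+a)\bigr)=f^{-1}\bigl(c^{-1}\cdot f(x)\bigr)+a$, which requires $f^{-1}\bigl(c^{-1}\cdot f(a)\bigr)=a$ and hence $c=1$, since a linearized polynomial need not commute with multiplication by $c$. So your approach is sound, and once you actually evaluate the compatibility condition in both subcases $c=1$ and $c=-1$ it yields a complete and correct proof of the corrected statement.
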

    \begin{proof}
        We have for the boomerang equation that
        \begin{align*}
            &f^{-1} \Big( c^{-1} \cdot f (x + a) + b \Big) - f^{-1} \Big( c \cdot f (x) + b \Big) \\
            &= f^{-1} \Big( c^{-1} \cdot f (x + a) \Big) - f^{-1} \Big( c \cdot f (x) \Big) \\
            &= f^{-1} \Big( c^{-1} \cdot f (x) \Big) - f^{-1} \Big( c \cdot f (x) \Big) + a \\
            &= a.
        \end{align*}
        Therefore,
        \begin{equation*}
            c^{-1} \cdot f(x) = c \cdot f(x) \quad \Longleftrightarrow \quad \left( c^{-1} - c \right) \cdot f(x) = 0.
        \end{equation*}
        If $c^2 = 1$, then this equation has $q$ solutions, and if $c^2 \neq 1$ this equation has an unique solution.
    \end{proof}

    Therefore, for the remainder of the paper we are only going to study non-linearized permutation monomials.

    \subsection{Plane Curves}
    Let $R$ be a ring, then $\Spec (R) = \{ \mathfrak{p} \subset R \mid \mathfrak{p} \text{ prime ideal} \}$ is called the spectrum of $R$.
    Also, $\Spec (R)$ can be equipped with the Zariski topology, see \cite[\S 2.1]{Goertz-AlgGeom}.
    A locally ringed space $(X, \mathcal{O}_X)$ is a topological space $X$ together with a sheaf of commutative rings $\mathcal{O}_X$ on $X$.
    If in addition $(X, \mathcal{O}_X)$ is isomorphic to $\big( \Spec (R), \mathcal{O}_{\Spec (R)} \big)$ for some ring $R$, then $(X, \mathcal{O}_X)$ is called an affine scheme.
    A scheme $(X, \mathcal{O}_X)$ is a locally ringed space that admits an open covering $X = \bigcup_i U_i$ of affine schemes $(U_i, \mathcal{O}_X \vert_{U_i})$.
    For a scheme $(X, \mathcal{O}_X)$ the ring $\Gamma (X, \mathcal{O}_X) = \mathcal{O}_X (X)$ is also known as the global sections of $X$.
    For a general introduction into the theory of schemes we refer to \cite{Goertz-AlgGeom}.

    Let $R$ be a ring, then the $n$-dimensional affine space $\AffSp{n}{R}$ is defined to be the scheme $\Spec \big( R [x_1, \dots, x_n] \big)$.
    Moreover, the $n$-dimensional projective space $\ProjSp{n}{R}$ over $R$ can be constructed by gluing $n + 1$ copies of the affine space $\AffSp{n}{R}$, see \cite[\S 3.6]{Goertz-AlgGeom}.
    Note that there also exists a natural morphism of schemes $p: \AffSp{n + 1}{R} \setminus \{ 0 \} \to \ProjSp{n}{R}$.
    Now let $R [X_0, \dots, X_n]$ be graded via the degree, and let $I \subset R [X_0, \dots, X_n]$ be a homogeneous ideal analog to the construction of $p$ one can construct a scheme $\mathcal{V}_+ (I)$ together with a closed immersion $\iota: \mathcal{V}_+ (I) \to \ProjSp{n}{R}$, $\mathcal{V}_+ (I)$ is also called the vanishing scheme of $I$, see \cite[3.7]{Goertz-AlgGeom}.

    Now let $k$ be a field, it is well-known that $\ProjSp{n}{k}$ and any closed subspace of it are separated and of finite type over $k$.
    Let $F, G \in k [X_0, X, Y]$ be homogeneous polynomials, then $\mathcal{V}_+ (F) \subset \ProjSp{2}{k}$ is called a plane curve over $k$.
    Moreover, we have for the scheme-theoretic intersection of two plain curves, see \cite[Example~4.38]{Goertz-AlgGeom},
    \begin{equation}
        \mathcal{V}_+ (F) \cap \mathcal{V}_+ (G) = \mathcal{V}_+ (F, G) \subset \ProjSp{2}{k}.
    \end{equation}

    A classical problem in algebraic geometry is to count the number of intersection points of two plane curves together with their multiplicities.
    \begin{defn}[{\cite[Definition~5.60]{Goertz-AlgGeom}}]
        Let $k$ be a field, and let $C, D \subset \ProjSp{2}{k}$ be two plane curves such that $Z := C \cap D$ is a $k$-scheme of dimension $0$.
        Then we call $i (C, D) := \dim_k \big( \Gamma (Z, \mathcal{O}_Z) \big)$ the intersection number of $C$ and $D$.
        For $z \in Z$ we call $i_z (C, D) := \dim_k (\mathcal{O}_{Z, z})$ the intersection number of $C$ and $D$ at $z$.
    \end{defn}

    It can be easily seen that
    \begin{equation}
        i (C, D) = \sum_{z \in C \cap D} i_z (C, D).
    \end{equation}

    By B\'ezout's well-known theorem, for two plane curves that do not have any common irreducible components the intersection number is simply the product of the degrees of the curves.
    \begin{thm}[{B\'ezout's theorem, \cite[Theorem~5.61]{Goertz-AlgGeom}}]
        Let $k$ be a field, and let $C = \mathcal{V}_+ (F)$ and $D = \mathcal{V}_+ (G)$ be plane curves in $\ProjSp{2}{k}$ given by polynomials without a common factor.
        Then
        \begin{equation*}
            i (C, D) = \degree{F} \cdot \degree{G}.
        \end{equation*}
        In particular, the intersection $C \cap D$ is non-empty and consists of a finite number of closed points.
    \end{thm}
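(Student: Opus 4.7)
The plan is to compute the intersection number via the Hilbert polynomial of the graded $k$-algebra $S/(F,G)$, where $S = k[X_0, X_1, X_2]$. Since the intersection scheme $Z = \mathcal{V}_+(F,G) \subset \ProjSp{2}{k}$ is the $\mathrm{Proj}$ of this graded ring, the shape of $S/(F,G)$ as a graded module, encoded in its Hilbert polynomial, reveals simultaneously the Krull dimension of $Z$ and, once $Z$ turns out to be zero-dimensional, the degree that equals $\dim_k \Gamma(Z, \mathcal{O}_Z)$.

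The first substantive step is to establish that $F, G$ form a regular sequence in $S$. Because $S$ is a unique factorization domain and $F, G$ share no common factor, multiplication by $G$ on $S/(F)$ is injective: if $GH \equiv 0 \pmod{F}$, then $F \mid GH$, and by coprimality $F \mid H$. The associated Koszul complex then yields the exact sequence of graded $S$-modules
\begin{equation*}
0 \to S(-d-e) \xrightarrow{(G,\, -F)} S(-d) \oplus S(-e) \xrightarrow{(F,\, G)} S \to S/(F,G) \to 0,
\end{equation*}
with $d = \degree{F}$ and $e = \degree{G}$.

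Using additivity of Hilbert polynomials together with $P_S(n) = \binom{n+2}{2}$, this resolution gives
\begin{equation*}
P_{S/(F,G)}(n) = \binom{n+2}{2} - \binom{n-d+2}{2} - \binom{n-e+2}{2} + \binom{n-d-e+2}{2},
\end{equation*}
and a routine binomial expansion shows that all $n^2$ and $n^1$ terms cancel, leaving the constant $de$. The constancy of the Hilbert polynomial forces $\dim Z = 0$, so $Z$ is a finite union of closed points, and since $d, e \geq 1$ we have $de \geq 1$, giving non-emptiness. Finally, Serre's correspondence identifies $(S/(F,G))_n$ with $\Gamma(Z, \mathcal{O}_Z(n))$ for $n \gg 0$, and since every invertible sheaf on a zero-dimensional projective $k$-scheme has trivial Picard class, $\dim_k \Gamma(Z, \mathcal{O}_Z(n)) = \dim_k \Gamma(Z, \mathcal{O}_Z)$. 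Combining these identifications with the computed Hilbert polynomial yields $i(C,D) = \dim_k \Gamma(Z, \mathcal{O}_Z) = de$.

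The main obstacle I anticipate is the last identification, i.e.\ the passage from the algebraic invariant $P_{S/(F,G)}(n)$ to the geometric invariant $\dim_k \Gamma(Z, \mathcal{O}_Z)$. This requires either a direct invocation of Serre's correspondence between finitely generated graded $S$-modules and coherent sheaves on $\ProjSp{2}{k}$, or equivalently, the Serre vanishing $H^i(Z, \mathcal{O}_Z(n)) = 0$ for $i \geq 1$ and $n \gg 0$, which is automatic once we know $Z$ is zero-dimensional but nonetheless invokes nontrivial cohomological machinery. The remaining steps, namely coprimality implying regularity of the sequence, exactness of the Koszul complex, and the binomial computation, are essentially formal once the framework is in place.
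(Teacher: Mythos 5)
The paper offers no proof of this statement: it is imported verbatim from the cited reference \cite[Theorem~5.61]{Goertz-AlgGeom} and used as a black box, so there is nothing internal to compare against. Your argument is the standard Hilbert-polynomial proof --- coprimality in the UFD $S=k[X_0,X_1,X_2]$ makes $(F,G)$ a regular sequence, the Koszul resolution yields $P_{S/(F,G)}(n)=de$ after the binomial cancellation, constancy of the Hilbert polynomial gives $\dim Z=0$ and non-emptiness, and the comparison $(S/(F,G))_n\cong\Gamma\big(Z,\mathcal{O}_Z(n)\big)$ for $n\gg 0$ together with triviality of $\mathcal{O}_Z(n)$ on the zero-dimensional (hence affine, Artinian) scheme $Z$ converts the constant into $\dim_k\Gamma(Z,\mathcal{O}_Z)=i(C,D)$. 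This is essentially the proof in the cited source and is correct; the one ingredient you flag as nontrivial, Serre's identification of graded pieces with twisted global sections for finitely generated graded modules, is standard and legitimately quotable, so no gap remains.
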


    \section{Generalized c-Boomerang Equation For Monomials}
    To estimate the $c$-boomerang uniformity of $x^d$ we study the following system of equations in two variables.
    \begin{defn}
        Let $\Fq$ be a finite field, let $d \in \mathbb{Z}_{> 1}$, and let $c_1, \dots, c_5 \in \Fq$.
        Then we call
        \begin{equation*}
            \mathcal{F} (d, c_1, \dots, c_5) =
            \left\{
            \begin{array}{ c c c c }
                z^d & - & c_1 \cdot x^d & = c_2, \\
                (z + 1)^d & - & c_3 \cdot (x + c_4)^d & = c_5,
            \end{array}
            \right.
        \end{equation*}
        the generalized boomerang equation system for $x^d$.
    \end{defn}

    Note that we can transform any $c$-boomerang equation
    \begin{equation}\label{Equ: substitution}
        \begin{split}
            (x + y)^d &- c \cdot x^d = b \\
            c \cdot (x + y + a)^d &- (x + a)^d = c \cdot b
        \end{split}
    \end{equation}
    into this shape by dividing both equations by $a^d$, the second equation by $c$, and then performing the substitutions
    \begin{equation}\label{Equ: transformation}
        \begin{aligned}
            &\hat{z} = \frac{x + y}{a}, \qquad &&\hat{x} = \frac{x}{a}, \qquad &&c_1 = c, \\
            &c_2 = c_5 = \frac{b}{a^d}, \qquad &&c_3 = c^{-1},          \qquad &&c_4 = 1.
        \end{aligned}
    \end{equation}

    To bound the number of solutions of the generalized boomerang equation system we need two lemmata.
    \begin{lem}\label[lem]{Lem: permutation monomial factorization}
        Let $\Fq$ be a finite field of characteristic $p$, let $d \in \mathbb{Z}_{> 1}$ be such that $\gcd (d, q - 1) = 1$ and $p \nmid d$, and let $a, b \in \Fqx$ be such that $b^d = a$.
        Then $x^d - a$ factors as
        \begin{equation*}
            x^d - a = (x - b) \cdot g (x),
        \end{equation*}
        where $g \in \Fq [x]$ does not have any roots in $\Fq$.
    \end{lem}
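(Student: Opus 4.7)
The plan is to combine two observations: the map $x \mapsto x^d$ is a bijection on $\Fq$ because $\gcd(d, q-1) = 1$, and the root $b$ of $x^d - a$ is simple because the characteristic does not divide $d$. Together, these force the cofactor $g$ to have no $\Fq$-rational roots.

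First I would note that since $x \mapsto x^d$ permutes $\Fq$, the element $b$ with $b^d = a$ is the unique element of $\Fq$ sent to $a$. In particular, $b$ is the only root of $x^d - a$ lying in $\Fq$. Polynomial division in $\Fq[x]$ then yields $x^d - a = (x - b) \cdot g(x)$ with $g \in \Fq[x]$ of degree $d - 1$.

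Next I would rule out $b$ as a root of $g$. Differentiating gives $(x^d - a)' = d \cdot x^{d - 1}$. Since $a \neq 0$ we have $b \neq 0$, and since $p \nmid d$ we have $d \not\equiv 0$ in $\Fq$, so $d \cdot b^{d - 1} \neq 0$. Thus $b$ is a simple root of $x^d - a$, which means $g(b) \neq 0$. Any hypothetical root $b' \in \Fq$ of $g$ would also satisfy ${b'}^d = a$, forcing $b' = b$ by the uniqueness above, a contradiction with $g(b) \neq 0$. Hence $g$ has no roots in $\Fq$.

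The only subtlety is that one really does need both hypotheses: without $\gcd(d, q-1) = 1$ the element $b$ need not be the only $\Fq$-root of $x^d - a$, and without $p \nmid d$ the factor $(x - b)$ could appear with multiplicity, which would place $b$ among the roots of $g$. Neither step is technically hard; the proof is essentially a one-liner once these two standard facts are invoked.
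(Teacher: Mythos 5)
Your proof is correct and follows the same overall strategy as the paper: factor out $(x-b)$, show $g(b)\neq 0$ using $p\nmid d$, and then invoke the uniqueness of the $d$-th root (from $\gcd(d,q-1)=1$) to conclude $g$ has no $\Fq$-roots. The only difference is cosmetic: you certify $g(b)\neq 0$ via the derivative criterion for simple roots, whereas the paper computes the cofactor $g(x)=\frac{a}{b}\sum_{i=0}^{d-1}(x/b)^i$ explicitly and evaluates $g(b)=\frac{a}{b}\cdot d$.
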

    \begin{proof}
        For $a,b \neq 0$, we have the factorization
        \begin{equation*}
            x^d - a = (x - b) \cdot \left( \sum_{i = 0}^{d - 1} a_i \cdot x^i \right),
        \end{equation*}
        where
        \begin{align*}
            -b \cdot a_0 &= -a, \\
            a_0 - b \cdot a_1 &= 0, \\
            &\ldots \\
            a_{d - 2} - b \cdot a_{d - 1} &= 0, \\
            a_{d - 1} &= 1.
        \end{align*}
        Therefore, $a_i = \frac{a}{b^{i + 1}}$ and henceforth $g (x) = \frac{a}{b} \cdot \sum_{i = 0}^{d - 1} \left( \frac{x}{b} \right)^i$.
        Let us verify that $b$ is not a root of $g$:
        \begin{equation*}
            g (b) = \frac{a}{b} \cdot \sum_{i = 0}^{d - 1} \left( \frac{b}{b} \right)^i = \frac{a}{b} \cdot d \neq 0,
        \end{equation*}
        since the characteristic $p$ does not divide $d$.
        On the other hand, $x^d = a$ must have a unique solution in $\Fq$, see \cite[7.1.~Lemma]{Niederreiter-FiniteFields}, so $g$ cannot have any roots in $\Fq$.
    \end{proof}
    \begin{lem}\label[lem]{Lem: irreducibility}
        Let $\Fq$ be a finite field of characteristic $p$, let $d \in \mathbb{Z}_{> 1}$ be such that $\gcd \left( d, q - 1\right) = 1$ and $p \nmid d$, and let $a, b \in \Fqx$.
        Then
        \begin{equation*}
            z^d - a \cdot x^d - b
        \end{equation*}
        is irreducible over $\overline{\Fq}$.
    \end{lem}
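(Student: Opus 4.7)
The plan is to apply Eisenstein's criterion to $f(x,z) = z^d - a \cdot x^d - b$, viewed as an element of $R[z]$ with $R = \overline{\Fq}[x]$. Since $f$ is monic in $z$ and all intermediate coefficients in $z$ vanish, the only nontrivial coefficient is the constant term $-(a \cdot x^d + b) \in R$, so the task reduces to exhibiting an irreducible element $\pi$ of the UFD $R$ dividing $a \cdot x^d + b$ but whose square does not.

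To produce such a $\pi$, I first verify that $a \cdot x^d + b$ is squarefree in $\overline{\Fq}[x]$. Its formal derivative equals $d \cdot a \cdot x^{d - 1}$, which is nonzero thanks to $p \nmid d$ and vanishes only at $x = 0$; since $b \neq 0$, the point $x = 0$ is not a root of $a \cdot x^d + b$. Hence the $d$ roots $\beta_1, \ldots, \beta_d \in \overline{\Fq}$ of $a \cdot x^d + b$ are pairwise distinct, so for any one of them the linear factor $\pi = x - \beta_i$ is a prime of $R$ dividing $a \cdot x^d + b$ exactly once, as required.

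With this choice of $\pi$, Eisenstein's criterion delivers irreducibility of $f$ in $\overline{\Fq}(x)[z]$. Because $f$ is monic in $z$, it is primitive in $R[z]$, and Gauss's lemma upgrades the irreducibility to $R[z] = \overline{\Fq}[x,z]$, i.e.\ irreducibility over $\overline{\Fq}$. I do not anticipate a genuine obstacle; the only delicate point is the bookkeeping of hypotheses — both $p \nmid d$ (so that the derivative is not identically zero, giving distinct roots) and $b \neq 0$ (so that $x = 0$ is not a common root of $a \cdot x^d + b$ and its derivative) are essential for the squarefreeness step, whereas $\gcd (d, q - 1) = 1$ plays no role in this lemma and is only needed elsewhere to ensure that $x \mapsto x^d$ is a permutation of $\Fq$.
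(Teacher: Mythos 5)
Your proof is correct and follows the same core strategy as the paper: apply Eisenstein's criterion to $z^d - a\cdot x^d - b$ viewed as a monic polynomial in $z$ over $R = \overline{\Fq}[x]$, at a linear prime $(x - \beta)$ dividing the constant coefficient exactly once, and then pass from the fraction field back to $R[z]$ via primitivity and Gauss's lemma. The one genuine difference is how the condition ``exactly once'' is certified: the paper invokes \Cref{Lem: permutation monomial factorization}, i.e.\ the explicit factorization $a\cdot x^d + b = (x - c)\cdot g(x)$ over $\Fq$ with $g(c) \neq 0$, which leans on $\gcd(d, q-1) = 1$ to single out the unique $\Fq$-rational $d$-th root; you instead show $a\cdot x^d + b$ is squarefree over $\overline{\Fq}$ directly from its derivative $d\cdot a\cdot x^{d-1}$ (nonzero since $p \nmid d$, and vanishing only at $x = 0$, which is not a root since $b \neq 0$), so that \emph{every} linear factor works. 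Your route is slightly cleaner and correctly identifies that $\gcd(d, q-1) = 1$ is not needed for this lemma, so the irreducibility statement holds for arbitrary $d$ with $p \nmid d$; you are also more careful than the paper in spelling out the Gauss's lemma step that upgrades irreducibility from $\overline{\Fq}(x)[z]$ to $\overline{\Fq}[x, z]$.
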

    \begin{proof}
        Let $f = z^d - a \cdot x^d - b$ and let $R = \overline{\Fq} [x]$, we are going to apply Eisenstein's irreducibility criterion \cite[Chapter~IV~\S 3~Theorem~3.1]{Lang-Algebra} over $R [z]$.
        Then $f$ has the coefficients $a_d = 1$, $a_{d - 1}, \dots, a_1 = 0$ and $a_0 = a \cdot x^d - b$.
        By \Cref{Lem: permutation monomial factorization} we have the factorization
        \begin{equation*}
            a \cdot x^d + b = (x - c) \cdot g (x),
        \end{equation*}
        where $c \in \Fqx$ is the unique solution of $x^d = -\frac{b}{a}$ and $g \in \Fq [x]$ does not have any roots over $\Fq$.
        Now we choose the prime ideal $\mathfrak{p} = (x - c) \subset R$, then trivially we have that $a_0, \dots, a_{d - 1} \in \mathfrak{p}$ and $a_d \notin \mathfrak{p}$.
        Since $c$ cannot be a root of $g (x)$ in the algebraic closure $\overline{\Fq}$ (if it were, then $g (x)$ would have roots over $\Fq$) we also have that $a_0^2 \notin \mathfrak{p}$.
        So by Eisenstein's criterion $f$ is irreducible over $\Fq$ as well as $\overline{\Fq}$.
    \end{proof}

    Now we can use B\'ezout's theorem to upper bound the number of solutions of the generalized boomerang equation.
    \begin{thm}\label[thm]{Th: number of solutions of generalized boomerang equation}
        Let $\Fq$ be a finite field of characteristic $p$, let $d \in \mathbb{Z}_{> 1}$ be such that $\gcd (d, q - 1) = 1$ and $p \nmid d$, and let $c_1, \dots, c_5 \in \Fqx$.
        Then the generalized boomerang equation system $\mathcal{F} (d, c_1, \dots, c_5)$ has at most $d^2$ many solutions over the algebraic closure $\overline{\Fq}$.
    \end{thm}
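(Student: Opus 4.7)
The strategy is to reduce the claim to B\'ezout's theorem applied to two plane curves of degree $d$. Homogenizing each equation of $\mathcal{F}(d, c_1, \ldots, c_5)$ with a new variable $w$, I obtain
\[
    F = z^d - c_1 x^d - c_2 w^d, \qquad G = (z+w)^d - c_3 (x + c_4 w)^d - c_5 w^d,
\]
both homogeneous of degree $d$ in $\overline{\Fq}[w, x, z]$. Every affine $\overline{\Fq}$-solution of the system corresponds to a distinct $\overline{\Fq}$-closed point of the scheme-theoretic intersection $\mathcal{V}_+(F) \cap \mathcal{V}_+(G) = \mathcal{V}_+(F, G) \subset \ProjSp{2}{\overline{\Fq}}$ lying in the affine patch $\{w = 1\}$. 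So once I know that $F$ and $G$ share no common factor, B\'ezout's theorem forces the number of such closed points to be at most $\degree{F} \cdot \degree{G} = d^2$.

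To establish coprimality, the plan is to show that $F$ and $G$ are each individually irreducible over $\overline{\Fq}$, and then verify that they are not scalar multiples of one another. Irreducibility of $F$ reduces to \Cref{Lem: irreducibility} applied to the dehomogenization $F(1, x, z) = z^d - c_1 x^d - c_2$ (which meets the required hypotheses since $c_1, c_2 \in \Fqx$ and $p \nmid d$); as the dehomogenization has degree exactly $d$, equivalently $w \nmid F$, irreducibility lifts from $F(1, x, z)$ to the homogeneous polynomial $F$. For $G$, I apply the invertible affine change of coordinates $\tilde z = z + w$, $\tilde x = x + c_4 w$, transforming $G$ into $\tilde z^d - c_3 \tilde x^d - c_5 w^d$, and the same argument goes through.

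To rule out $G = \lambda F$ for some $\lambda \in \overline{\Fq}^\times$, I compare two coefficients. Both polynomials contain $z^d$ with coefficient $1$, forcing $\lambda = 1$. However, the coefficient of $z^{d-1} w$ is $0$ in $F$ and equals $d$ in $G$ (from the binomial expansion of $(z + w)^d$), and $d \neq 0$ in $\overline{\Fq}$ since $p \nmid d$. This is a contradiction, so $F$ and $G$ are coprime.

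The main obstacle I anticipate is the coprimality step; in particular, transferring \Cref{Lem: irreducibility} to $G$ requires first absorbing the shifts $z \mapsto z + w$ and $x \mapsto x + c_4 w$ into a linear change of coordinates and then checking that this change preserves both homogeneity and irreducibility. Once the irreducibility and non-proportionality of $F$ and $G$ are in place, B\'ezout's theorem finishes the proof in one line.
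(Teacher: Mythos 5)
Your proposal is correct and follows essentially the same route as the paper: homogenize, establish coprimality of the two degree-$d$ forms via \Cref{Lem: irreducibility}, and invoke B\'ezout's theorem to bound the number of closed (hence affine) intersection points by $d^2$. The only cosmetic differences are that you also prove irreducibility of the second curve via a linear change of coordinates (the paper only needs irreducibility of the first and rules out proportionality by spotting a monomial $z^i$ with $\binom{d}{i}\neq 0$), and your coefficient comparison at $z^{d-1}w$ is just the explicit $i=1$ instance of that same observation.
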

    \begin{proof}
        For ease of reading we denote the algebraic closure of $\Fq$ as $k = \overline{\Fq}$.
        Let $\mathcal{F} (d, c_1, \dots,\allowbreak c_5) = \{ f_1, f_2 \} \subset k [z, x]$ be the generalized boomerang equation system.
        First we homogenize the polynomial system with respect to the variable $X_0$, and we denote the homogenizations as $F_1 = f_1^\homog$ and $F_2 = f_2^\homog$.
        Then $F_1$ and $F_2$ define plane curves over $k$, i.e.\ closed subschemes $C = \mathcal{V}_+ (F_1)$ and $D = \mathcal{V}_+ (F_2)$ of $\ProjSp{2}{k}$.
        Note that factorization of a polynomial is invariant under homogenization, see \cite[Proposition~4.3.2]{Kreuzer-CompAlg2}, so by \Cref{Lem: irreducibility} $F_1$ is irreducible in $k [X_0, Z, X]$.
        To apply B\'ezout's theorem we have to verify that $F_1$ and $F_2$ do not have a common factor, so let us take a look at the inhomogeneous equations
        \begin{align*}
            f_1 &= z^d  -  c_1 \cdot x^d - c_2, \\
            f_2 &= (z + 1)^d  -  c_3 \cdot (x + c_4)^d - c_5.
        \end{align*}
        Suppose they have a common factor, since $f_1$ is irreducible by \Cref{Lem: irreducibility} we can then only have that $f_2 = \alpha \cdot f_1$, where $\alpha \in k^\times$.
        On the other hand, since by assumption $d$ is not a power of the characteristic $p$ we have for some $1 \leq i \leq d - 1$ that $\binom{d}{i} \neq 0$ in $\Fq$.
        I.e., the monomial $z^i$ is present in $f_2$ and therefore $f_2$ cannot be a multiple of $f_1$.
        So, by B\'ezout's theorem \cite[Theorem~5.61]{Goertz-AlgGeom} we have for the intersection number of $C$ and $D$ that
        \begin{equation*}
            i \big( C, D \big) = \degree{F_1} \cdot \degree{F_2} = d^2.
        \end{equation*}
        On the other hand, by definition of the intersection number we have that
        \begin{align*}
            i \big( C, D \big)
            &= \sum_{z \in C \cap D} \dim_k \left( \mathcal{O}_{C \cap D, z} \right) \\
            &\geq \sum_{z \in C \cap D} 1,
        \end{align*}
        where the latter inequality follows from $k \hookrightarrow \mathcal{O}_{C \cap D, z}$.
        By \cite[Lemma~5.59]{Goertz-AlgGeom} $C \cap D$ is a zero-dimensional scheme, now let $x \in C \cap D$ be a point.
        Then there exists an affine open subscheme $U = \Spec (A) \subset C \cap D$ such that $x \in U$.
        Moreover, by \cite[Lemma~5.7]{Goertz-AlgGeom} $0 \leq \dim (U) \leq \dim (C \cap D) = 0$, hence $x$ corresponds to a maximal ideal $\mathfrak{m}_x \in U$.
        Since $C \cap D$ is a closed subscheme of $\ProjSp{2}{k}$ it is of finite type, therefore $U$ is also of finite type.
        By Hilbert's Nullstellensatz \cite[Theorem~1.17]{Goertz-AlgGeom} $k \subset A / \mathfrak{m}_x$ is a finite field extension, so by \cite[Proposition~3.33]{Goertz-AlgGeom} this already implies that $x$ is closed in $C \cap D$.
        In particular, every point in our intersection is closed.
        Further, recall the set of $k$-valued points, see \cite[Example~5.3]{Goertz-AlgGeom},
        \begin{align*}
            C\cap D (k)
            = \mathcal{V}_+ (F_1, F_2) \big( k \big)
            &= \Hom_{k} \Big( \Spec \big( k \big), \mathcal{V}_+ (F_1, F_2) \Big) \\
            &= \left\{ \mathbf{x} \in \ProjSp{2}{k} \big( k \big) \; \middle\vert \; F_1 (\mathbf{x}) = F_2 (\mathbf{x}) = 0 \right\}.
        \end{align*}
        Since $k$ is algebraically closed and $\mathcal{V}_+ (F_1, F_2)$ is of finite type there is a one-to-one bijection between the closed points of $\mathcal{V}_+ (F_1, F_2)$ and the $k$-valued points of $\mathcal{V}_+ (F_1, F_2)$, see \cite[Corollary~3.36]{Goertz-AlgGeom}.
        Summing up all these observations we have derived the inequality
        \begin{equation*}
            \left| \left\{ \mathbf{x} \in \ProjSp{2}{k} \; \middle\vert \; F_1 (\mathbf{x}) = F_2 (\mathbf{x}) = 0 \right\} \right| \leq d^2.
        \end{equation*}
        Finally, let $U_0 = \left\{ (X_0, Z, X) \in \ProjSp{2}{k} \; \middle\vert \; T \neq 0 \right\}$, then we can compute the affine solutions of $f_1 = f_2 = 0$ via, see \cite[Chapter~8~\S 2~Proposition~6]{Cox-Ideals},
        \begin{equation*}
            \left\{ (z, x) \in k^2 \; \middle\vert \; f_1 (z, x) = f_2 (z, x) = 0 \right\} = \left\{ \mathbf{x} \in \ProjSp{2}{k} \; \middle\vert \; F_1 (\mathbf{x}) = F_2 (\mathbf{x}) = 0 \right\} \cap U_0.
        \end{equation*}
        So also the number of affine solutions to the generalized boomerang equation system $\mathcal{F} (d, c_1, \dots, c_5)$ is bounded by $d^2$ over $k$.
    \end{proof}

    \begin{cor}\label[cor]{Cor: boomerang uniformity of monomials}
        Let $\Fq$ be a finite field of characteristic $p$, let $c \in \Fqx$, and let $d \in \mathbb{Z}_{> 1}$ be such that $\gcd (d, q - 1) = 1$ and $p \nmid d$.
        Then
        \begin{enumerate}
            \item $\beta_{x^d, c} \leq d^2$.

            \item $\beta_{x^\frac{1}{d}, c} \leq d^2$.
        \end{enumerate}
    \end{cor}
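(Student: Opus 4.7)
My plan is to deduce both parts of the corollary directly from \Cref{Th: number of solutions of generalized boomerang equation} by rewriting each $c$-boomerang counting problem as an instance of the generalized system $\mathcal{F}(d, c_1, \dots, c_5)$ whose five constants all lie in $\Fqx$; \Cref{Th: number of solutions of generalized boomerang equation} then caps its solution count by $d^2$, and taking the maximum over $(a, b)$ produces the claimed bound on $\beta_{\cdot, c}$.

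For part (1), this is essentially the substitution already spelled out between \Cref{Equ: substitution} and \Cref{Equ: transformation}: starting from the $c$-boomerang system of $x^d$ with $a, b \in \Fqx$, I divide both equations by $a^d$, divide the second additionally by $c$, and set $\hat z = (x+y)/a$, $\hat x = x/a$. This yields exactly $\mathcal{F}(d, c, b/a^d, c^{-1}, 1, b/a^d)$, whose five constants are nonzero because $a, b, c \in \Fqx$, and the substitutions are bijective on $\Fq^2$.

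For part (2), applying part (1) to $x^{1/d}$ only produces the bound $e^2$, where $e \equiv d^{-1} \pmod{q-1}$ may be as large as $q - 2$, so I want to keep working with the exponent $d$. To do so, I would use the single-variable form of the BCT from \Cref{Rem: original definition}, in which $F^{-1}$ appears: for $F = x^{1/d}$ this $F^{-1}$ is precisely $x^d$, and the equation under the cardinality becomes the polynomial identity $\bigl(c^{-1}(x+a)^{1/d} + b\bigr)^d - \bigl(c x^{1/d} + b\bigr)^d = a$ in one variable. Lifting via the bijection $x \mapsto (u, w) = (x^{1/d}, (x+a)^{1/d})$ between $\Fq$ and $\{(u, w) \in \Fq^2 : w^d - u^d = a\}$ converts this count into the number of $\Fq$-solutions of the polynomial system
\begin{align*}
w^d - u^d &= a, \\
(c^{-1} w + b)^d - (c u + b)^d &= a.
\end{align*}
A short linear rescaling $u = (b/c)\tilde x$, $w = bc\, z$ (bijective on $\Fq^2$ since $b, c \neq 0$) turns this into $\mathcal{F}(d, c^{-2d}, a/(bc)^d, 1, 1, a/b^d)$, whose constants are again nonzero, and \Cref{Th: number of solutions of generalized boomerang equation} yields the bound $d^2$. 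Neither case should present a real obstacle, because the substantive algebraic-geometric content sits entirely inside \Cref{Th: number of solutions of generalized boomerang equation}; the only things to watch are that the intermediate substitutions are bijective on $\Fq^2$ and that none of the $c_i$ vanish, both of which are automatic from $a, b, c \in \Fqx$.
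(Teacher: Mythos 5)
Your proposal is correct and follows essentially the same route as the paper: both parts are reduced to an instance of the generalized boomerang equation system with all five constants in $\Fqx$ and then capped by $d^2$ via \Cref{Th: number of solutions of generalized boomerang equation}. The only cosmetic difference is in part (2), where the paper obtains the degree-$d$ system by invoking \Cref{Lem: inverse relation} while you derive it by lifting the single-variable BCT formula of \Cref{Rem: original definition} through $x \mapsto \bigl(x^{1/d}, (x+a)^{1/d}\bigr)$; after the substitution $w = cz$ the two systems coincide, so the arguments are interchangeable.
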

    \begin{proof}
        The first claim follows from \Cref{Th: number of solutions of generalized boomerang equation} and the transformation of the $c$-boomerang equations into a generalized boomerang equation system, see \Cref{Equ: transformation}.
        For the second claim, recall that by \Cref{Lem: inverse relation}
        \begin{equation*}
            {}_{c} \mathcal{B}_{x^\frac{1}{d}} (a, b) = \left| \left\{ (x, y) \in \Fq^2 \; \middle\vert \;
            \begin{aligned}
                (x + y)^d &- c^{-d} \cdot x^\frac{1}{d} = c^{-d} \cdot a \\
                c^{-d} \cdot (x + y + b)^d &- \left( x + \frac{b}{c} \right)^d = c^{-d} \cdot a
            \end{aligned}
            \right\} \right|.
        \end{equation*}
        We can transform these two equations into a generalized boomerang equation system by dividing both equations by $b^d$, the second equation by $c^{-d}$ and the substitutions $\hat{z} = \frac{x + y}{b}$ and $\hat{x} = \frac{x}{b}$.
        Then we can again apply \Cref{Th: number of solutions of generalized boomerang equation}.
    \end{proof}

    \begin{cor}
        Let $\F_{p^n}$ be a finite field, let $c \in \F_{p^n}^\times$, let $d \in \mathbb{Z}_{> 1}$ be such that $\gcd (d, p^n - 1) = 1$ and $p \nmid d$, and let $1 \leq i < n$.
        Then
        \begin{enumerate}
            \item $\beta_{x^{d \cdot p^i}, c} \leq d^2$.

            \item $\beta_{x^\frac{1}{d \cdot p^i}, c} \leq d^2$.
        \end{enumerate}
    \end{cor}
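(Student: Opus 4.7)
The plan is to reduce both parts of the corollary directly to the corresponding parts of Corollary~\ref{Cor: boomerang uniformity of monomials} by exploiting the fact that raising to $p^i$ is a field automorphism of $\F_{p^n}$. Let $\phi \colon \F_{p^n} \to \F_{p^n}$, $\phi(y) = y^{p^i}$, be the $i$-th power Frobenius; it is an $\F_p$-linear bijection, is also multiplicative, and fixes $0$. Since $x^{d \cdot p^i} = \phi(x^d)$, the core step is to convert the $c$-boomerang system for $x^{d \cdot p^i}$ into an equivalent boomerang system for $x^d$ by pulling $\phi$ out of every term and then inverting it.

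Concretely, for part (1) I would write out the $c$-boomerang equations for $F(x) = x^{d \cdot p^i}$,
\begin{align*}
\phi\bigl((x+y)^d\bigr) - c \cdot \phi(x^d) &= b, \\
c \cdot \phi\bigl((x+y+a)^d\bigr) - \phi\bigl((x+a)^d\bigr) &= c \cdot b,
\end{align*}
and then apply $\phi^{-1}$ to each equation. Because $\phi^{-1}$ is both additive and multiplicative, this yields
\begin{align*}
(x+y)^d - \phi^{-1}(c) \cdot x^d &= \phi^{-1}(b), \\
\phi^{-1}(c) \cdot (x+y+a)^d - (x+a)^d &= \phi^{-1}(c) \cdot \phi^{-1}(b),
\end{align*}
which is precisely the $c'$-boomerang system for $x^d$ at $(a, b')$ with $c' = \phi^{-1}(c)$ and $b' = \phi^{-1}(b)$. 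Since $\phi^{-1}$ is a bijection of $\F_{p^n}^\times$ that fixes $0$, one obtains ${}_c B_{x^{d \cdot p^i}}(a, b) = {}_{c'} B_{x^d}(a, b')$, and taking the maximum over nonzero $(a, b)$ gives $\beta_{x^{d \cdot p^i}, c} = \beta_{x^d, c'}$, so Corollary~\ref{Cor: boomerang uniformity of monomials}(1) concludes.

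For part (2) the only new input is to identify the inverse exponent. Since $p^i \cdot p^{n-i} = p^n \equiv 1 \pmod{q-1}$, we have $\tfrac{1}{d \cdot p^i} \equiv \tfrac{1}{d} \cdot p^{n-i} \pmod{q-1}$, hence $x^{1/(d \cdot p^i)} = \psi\bigl(x^{1/d}\bigr)$ for the Frobenius power $\psi(y) = y^{p^{n-i}}$. Running the same twist argument with $\psi$ in place of $\phi$ and $x^{1/d}$ in place of $x^d$ reduces the statement to Corollary~\ref{Cor: boomerang uniformity of monomials}(2). There is no genuine obstacle; the entire argument is bookkeeping of how the parameters $(a, b, c)$ transform under the Frobenius twist, and no new geometric input is needed beyond the bound already established in the pure monomial case.
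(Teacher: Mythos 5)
Your proof is correct and takes essentially the same route as the paper: applying the inverse Frobenius $\phi^{-1}(y)=y^{p^{n-i}}$ to each equation is exactly the paper's step of raising the $c$-boomerang equations to the $p^{n-i}$-th power and using that $x^{p^{n-i}}$ is linearized, which twists the parameters to $c'=c^{p^{n-i}}$ and $b'=b^{p^{n-i}}$ and reduces everything to the pure monomial case. The only cosmetic difference is that you invoke \Cref{Cor: boomerang uniformity of monomials} directly, while the paper phrases the reduction through \Cref{Th: number of solutions of generalized boomerang equation} and, for part (2), \Cref{Lem: inverse relation}.
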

    \begin{proof}
        Note that $(d \cdot p^i) \cdot p^{n - i} \equiv d \cdot p^n \equiv d \mod (p^n - 1)$, which implies that $\left( x^{d \cdot p^i} \right)^{p^{n - i}} \equiv x^d \mod \left( x^{p^n} - x \right)$.
        For (1), we set up the $c$-boomerang equations for $x^{d \cdot p^i}$, then we raise them to the $p^{n - i}$-th power.
        Since $x^{p^{n - i}}$ is a linearized monomial and we only care about $\F_{p^n}$-valued solutions, we can transform the equations with our previous observation into $c$-boomerang equations for $x^d$.
        Then the claim follows from \Cref{Th: number of solutions of generalized boomerang equation}.
        For (2), we apply the same argument to the equations from \Cref{Lem: inverse relation}.
    \end{proof}

    \begin{rem}
        For an arbitrary permutation polynomial $F \in \Fq [x]$, if one can prove that the polynomial
        \begin{equation*}
            F (x + y) - c \cdot F (x) - b
        \end{equation*}
        is irreducible for over $\overline{\Fq}$ for all $b, c \in \Fqx$, and that the polynomial
        \begin{equation*}
            c \cdot F (x + y + a) - F (x + a) - c \cdot b
        \end{equation*}
        has at least one term that is not present in $F (x + y) - c \cdot F (x) - b$ for all $a, b, c \in \Fqx$, then one can also prove via B\'ezout's theorem that $\beta_{F, c} \leq \degree{F}^2$.
    \end{rem}

    \subsection{The Cases \texorpdfstring{$a = 0$ \& $b = 0$}{a = 0 \& b = 0}}
    For completeness, let us also compute the $c$-BCT table entry for a permutation monomial $x^d$ when either $a = 0$ or $b = 0$.
    \begin{lem}
        Let $\Fq$ be a finite field, let $d \in \mathbb{Z}_{> 1}$ be such that $\gcd \left( d, q - 1 \right) = 1$, and let $a, b, c \in \Fqx$.
        Then
        \begin{enumerate}
            \item ${}_{c} B_{x^d} (0, b) =
            \begin{dcases}
                1, & c^2 \neq 1, \\
                q, & c^2 = 1.
            \end{dcases}
            $

            \item ${}_{c} B_{x^d} (a, 0) =
            \begin{dcases}
                1, & c^2 \neq 1, \\
                0, & c = -1, \\
                q, & c = 1.
            \end{dcases}
            $
        \end{enumerate}
    \end{lem}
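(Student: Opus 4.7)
Both parts come from writing out the boomerang equation system from \Cref{Def: boomerang uniformity} for $F=x^d$ and using that, because $\gcd(d,q-1)=1$, extraction of a $d$-th root is a single-valued operation on $\Fq$.

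For part (1) I would set $a=0$, which leaves the system
\begin{align*}
  (x+y)^d - c\,x^d &= b, \\
  c\,(x+y)^d - x^d &= c\,b.
\end{align*}
Substituting $(x+y)^d = c\,x^d+b$ from the first equation into the second collapses it to $(c^2-1)\,x^d = 0$. When $c^2\neq 1$ this forces $x=0$, after which $y$ is uniquely determined by $y^d=b$, giving one solution. When $c^2=1$ the second equation is automatically implied by the first, so the count equals the number of pairs $(x,y)$ satisfying the first equation, and for each of the $q$ values of $x$ there is a unique $y$ because $x^d$ is a permutation, hence $q$ solutions.

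For part (2), setting $b=0$, the first equation becomes $(x+y)^d = c\,x^d$. Letting $\gamma\in\Fqx$ be the unique $d$-th root of $c$, this reduces to $y=(\gamma-1)x$. I would substitute this into the second equation $c\,(x+y+a)^d = (x+a)^d$ and, after rewriting it as $(\gamma(\gamma x+a))^d = (x+a)^d$, extract the unique $d$-th root to obtain the linear relation
\begin{equation*}
  (\gamma-1)(\gamma+1)\,x = -(\gamma-1)\,a.
\end{equation*}
A case split on $\gamma$ then yields the claim: if $\gamma=1$ the relation is vacuous and $y=0$, so every $x\in\Fq$ works, contributing $q$ solutions; if $\gamma=-1$ the relation reduces to $2a=0$, which is impossible in odd characteristic since $a\neq 0$; and if $\gamma\neq\pm 1$ the relation determines $x=-a/(\gamma+1)$ uniquely, giving one solution.

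The only step that needs care is translating the conditions on $\gamma$ back to the conditions on $c$ as stated. Clearly $\gamma=1\Leftrightarrow c=1$. In odd characteristic, $\gcd(d,q-1)=1$ forces $d$ to be odd (as $q-1$ is even), so $(-1)^d=-1$ and therefore $\gamma=-1\Leftrightarrow c=-1$, while $c^2\neq 1$ matches $\gamma\neq\pm 1$. In characteristic $2$ one has $-1=1$, so the $\gamma=-1$ branch collapses harmlessly into $\gamma=1$, consistent with $c=-1$ meaning $c=1$ there. This minor bookkeeping is the only real subtlety; the rest is direct substitution and the uniqueness of $d$-th roots.
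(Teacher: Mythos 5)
Your proof is correct and follows essentially the same route as the paper: reduce each case to a linear equation in $x$ by extracting unique $d$-th roots, then split on whether $c^{1/d}=\pm 1$ (your $\gamma$ is exactly the paper's $c^{1/d}$, and your relation $(\gamma^2-1)x=(1-\gamma)a$ is the paper's $(c^{-1/d}-c^{1/d})x=(1-c^{-1/d})a$ up to a unit factor). The only cosmetic difference is that you work with the two-variable system of \Cref{Def: boomerang uniformity} while the paper uses the equivalent single-variable formulation of \Cref{Rem: original definition}; both are legitimate by the cited equivalence for permutations.
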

    \begin{proof}
        For this proof we work with the $c$-BCT definition from \Cref{Rem: original definition}.
        For (1), we have that
        \begin{align*}
            \left( c^{-1} \cdot x^d + b \right)^\frac{1}{d} - \left( c \cdot x^d + b \right)^\frac{1}{d} &= 0 \\
            \Rightarrow c^{-1} \cdot x^d + b &= c \cdot x^d + b \\
            \Rightarrow \left( c^{-1} - c \right) \cdot x &= 0.
        \end{align*}
        If $c^2 = 1$, then the left-hand side vanishes, so there are $q$ solutions for $x$.
        If $c^2 \neq 1$, then there is an unique solution for $x$.

        For (2), we have that
        \begin{align*}
            \left( c^{-1} \cdot (x + a)^d \right)^\frac{1}{d} - \left( c \cdot x^d \right)^\frac{1}{d} &= a \\
            c^{-\frac{1}{d}} \cdot (x + a) - c^\frac{1}{d} \cdot x &= a \\
            \Rightarrow \left( c^{-\frac{1}{d}} - c^\frac{1}{d} \right) \cdot x &= \left( 1 - c^{-\frac{1}{d}} \right) \cdot a.
        \end{align*}
        We have that $c^{-\frac{1}{d}} = c^\frac{1}{d}$ if and only if $c^2 = 1$, else there is an unique solution for $x$.
        If $c = 1$, then the left and the right-hand side vanish, so we have $q$ solutions for $x$.
        If $c = -1$, then the left-hand side vanishes, but the right one does not, so we have $0$ solutions for $x$.
    \end{proof}

    \section{c-Uniformities Of The Generalized Triangular Dynamical System}
    The \emph{Generalized triangular dynamical system} (GTDS) \cite{GTDS} is a polynomial-based approach to describe cryptographic permutations over finite fields.
    It unifies the dominant block cipher strategies, being the Substitution-Permutation Network (SPN) and the Feistel Network, in a common primitive.
    As discussed in \cite[\S 4]{GTDS} almost all proposed symmetric key block ciphers can be considered as GTDS-based ciphers.
    \begin{defn}[{Generalized triangular dynamical system, \cite[Definition~7]{GTDS}}]\label[defn]{Def: generalized triangular dynamical system}
        Let $\Fq$ be a finite field, and let $n \geq 1$.
        For $1 \leq i \leq n$, let $p_i \in \Fq[x]$ be permutation polynomials, and for $1 \leq i \leq n - 1$, let $g_i, h_i \in \Fq[x_{i + 1}, \dots, x_n]$ be polynomials such that the polynomials $g_i$ do not have zeros over $\Fq$.
        Then we define a generalized triangular dynamical system $\mathcal{F} = \{ f_1, \dots, f_n \}$ as follows
        \begin{equation*}
            \begin{split}
                f_1(x_1,\dots,x_n)		&= p_1(x_1) \cdot g_1(x_2,\dots,x_n) + h_1(x_2,\dots,x_n),  \\
                f_2(x_1,\dots,x_n)		&= p_2(x_2) \cdot g_2(x_3,\dots,x_n) + h_2(x_3,\dots,x_n),	\\
                & \dots											                                    \\
                f_{n-1}(x_1,\dots,x_n)	&= p_{n-1}(x_{n-1}) \cdot g_{n-1}(x_n) + h_{n-1}(x_n),	    \\
                f_n(x_1,\dots,x_n)		&= p_n(x_n).
            \end{split}
        \end{equation*}
    \end{defn}

    It is easy to see that if all $p_i$'s are univariate permutations and the $g_i$'s do not have any zeros over $\Fq$, then the GTDS is invertible, see \cite[Proposition~8]{GTDS}.

    \subsection{c-Differential Uniformity Of The Generalized Triangular Dynamical System}
    In \cite[Theorem~18]{GTDS} the differential uniformity of the GTDS was bounded in terms of the degrees of the univariate permutation polynomials $p_i$ and the Hamming weight of the input difference.
    We now generalize this result to $c \neq 1$.
    As preparation, we recall a lemma.
    \begin{lem}[{\cite[Lemma~17]{GTDS}}]\label[lem]{Lem: degree and differential uniformity}
        Let $\Fq$ be a finite field, and let $f \in \Fq [x] / (x^q - x)$.
        Then $\delta (f) < q$ if and only if $\deg \big( f(x + a) - f(x) \big) > 0$ for all $a \in \Fqx$.
        In particular, if $\delta (f) < q$ then $\delta (f) < \degree{f}$.
    \end{lem}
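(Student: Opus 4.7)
The plan is to prove the equivalence via two implications and then extract the strict bound $\delta(f) < \deg(f)$ as a direct corollary. The underlying observation to keep in mind throughout is that since $f$ has a canonical representative of degree at most $q - 1$, a polynomial identity in $\Fq[x]/(x^q - x)$ is equivalent to the corresponding identity of evaluation functions on $\Fq$; this lets me pass freely between the polynomial degree of $f(x + a) - f(x)$ and the number of $x$ that satisfy $f(x + a) - f(x) = b$.

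For the forward direction of the iff I would argue by contrapositive. If there exists $a \in \Fqx$ with $\deg \big( f(x + a) - f(x) \big) \leq 0$, then $f(x + a) - f(x) = b$ for some constant $b \in \Fq$, and every element of $\Fq$ solves this equation, forcing $\delta_f(a, b) = q$ and hence $\delta(f) = q$. For the reverse direction, suppose $\deg \big( f(x + a) - f(x) \big) \geq 1$ for every $a \in \Fqx$. Then for every $b \in \Fq$ the polynomial $f(x + a) - f(x) - b$ is nonzero of positive degree, and moreover its degree is at most $\deg(f) - 1 \leq q - 2$ since the $x^{\deg(f)}$ coefficients cancel. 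Its number of roots in $\Fq$ is therefore strictly less than $q$, giving $\delta_f(a, b) < q$ for every admissible pair $(a, b)$, i.e., $\delta(f) < q$.

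For the ``in particular'' statement, the bound $\deg \big( f(x + a) - f(x) \big) \leq \deg(f) - 1$ is automatic in every characteristic (it is just the cancellation of $c_{\deg(f)} x^{\deg(f)}$ with itself). Combining this with the reverse direction already established, whenever $\delta(f) < q$ we obtain
\begin{equation*}
\delta_f(a, b) \leq \deg \big( f(x + a) - f(x) \big) \leq \deg(f) - 1 < \deg(f)
\end{equation*}
for every $a \in \Fqx$ and $b \in \Fq$, which yields $\delta(f) < \deg(f)$. I do not foresee a real obstacle; the only point meriting some care is the identification between polynomials in the quotient ring $\Fq[x]/(x^q - x)$ and functions on $\Fq$, which is immediate from $\deg(f) \leq q - 1$ and the fact that a nonzero polynomial of degree less than $q$ cannot vanish on all of $\Fq$.
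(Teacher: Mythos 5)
Your argument is correct. Note that the paper does not prove this lemma itself --- it only recalls it from the cited reference \cite[Lemma~17]{GTDS} --- so there is no in-paper proof to compare against; your reasoning (contrapositive for the forward direction via a constant difference giving $\delta_f(a,b) = q$; for the converse, the cancellation of the top-degree terms giving $\deg\big(f(x+a)-f(x)\big) \leq \degree{f} - 1 \leq q-2$ and hence fewer than $q$ roots of $f(x+a)-f(x)-b$) is the standard argument and also delivers the ``in particular'' clause correctly. The one point you rightly flag, the identification of elements of $\Fq[x]/(x^q-x)$ with their canonical representatives of degree at most $q-1$ so that root-counting bounds apply, is handled adequately.
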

    \begin{thm}\label[thm]{Th: c-differential distribution of GTDS}
        Let $\Fq$ be a finite field, let $n \geq 1$ be an integer, let $c \in \Fq \setminus \{ 0, 1 \}$, and let $\mathcal{F}: \Fqn \to \Fqn$ be a GTDS.
        Let $p_1, \dots, p_n \in \Fq [x] / (x^q - x)$ be the univariate permutation polynomials of the GTDS $\mathcal{F}$ such that for every $i$ one has $\delta (p_i) < q$.
        Let $\boldsymbol{\Delta x}, \boldsymbol{\Delta y} \in \Fqn$.
        Then the $c$-Differential Distribution Table of $\mathcal{F}$ at $\boldsymbol{\Delta x}$ and $\boldsymbol{\Delta y}$ is bounded by
        \begin{equation*}
            {}_{c} \delta_\mathcal{F} ( \boldsymbol{\Delta x}, \boldsymbol{\Delta y} )
            \leq
            \degree{p_n} \cdot \prod_{i = 1}^{n - 1}
            \begin{dcases}
                \begin{rcases}
                    \degree{p_i}, & \boldsymbol{\Delta x}_i \neq 0, \\
                    q, & \boldsymbol{\Delta x}_i = 0
                \end{rcases}
            \end{dcases}
            .
        \end{equation*}
    \end{thm}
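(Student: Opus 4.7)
The plan is to exploit the triangular structure of the GTDS: $f_n$ depends only on $x_n$, and each $f_i$ depends only on $x_i, \dots, x_n$. I would count solutions of $\mathcal{F} (\mathbf{x} + \boldsymbol{\Delta x}) - c \cdot \mathcal{F} (\mathbf{x}) = \boldsymbol{\Delta y}$ from bottom to top: first bound the number of admissible $x_n$ coming from the $n$-th equation, then for each such $x_n$ bound the number of admissible $x_{n - 1}$ from the $(n - 1)$-th equation after substitution, and so on. Since at every level the per-coordinate bound will turn out to be independent of the particular higher-indexed values that have been fixed, the total count is at most the product of these maxima, which is exactly the bound in the statement.

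For the base case $i = n$, the equation $p_n (x_n + \boldsymbol{\Delta x}_n) - c \cdot p_n (x_n) = \boldsymbol{\Delta y}_n$ is univariate. When $\boldsymbol{\Delta x}_n \neq 0$, its leading coefficient equals $(1 - c) \cdot \LC (p_n)$, which is nonzero because $c \neq 1$, so the polynomial has degree $\degree{p_n}$ and at most $\degree{p_n}$ roots. When $\boldsymbol{\Delta x}_n = 0$, the equation reduces to $(1 - c) \cdot p_n (x_n) = \boldsymbol{\Delta y}_n$, which has a unique solution because $p_n$ is a permutation. Either way, at most $\degree{p_n}$ solutions.

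For $1 \leq i \leq n - 1$, fix any tuple $(x_{i + 1}, \dots, x_n)$ satisfying the higher-indexed equations and substitute it into the $i$-th equation. Writing $\alpha = g_i (x_{i + 1} + \boldsymbol{\Delta x}_{i + 1}, \dots, x_n + \boldsymbol{\Delta x}_n)$ and $\beta = g_i (x_{i + 1}, \dots, x_n)$, which both lie in $\Fqx$ since $g_i$ has no zeros over $\Fq$, and collecting the remaining constants from $h_i$ and $\boldsymbol{\Delta y}_i$ into a single $\eta \in \Fq$, the equation reads
\begin{equation*}
    \alpha \cdot p_i (x_i + \boldsymbol{\Delta x}_i) - c \beta \cdot p_i (x_i) = \eta.
\end{equation*}
For $\boldsymbol{\Delta x}_i \neq 0$: if $\alpha \neq c \beta$, the left-hand side is a polynomial in $x_i$ of degree $\degree{p_i}$ with leading coefficient $(\alpha - c\beta) \cdot \LC (p_i) \neq 0$, so there are at most $\degree{p_i}$ solutions; if $\alpha = c \beta$, the equation reduces to $\alpha \cdot \big( p_i (x_i + \boldsymbol{\Delta x}_i) - p_i (x_i) \big) = \eta$, which by \Cref{Lem: degree and differential uniformity} has at most $\delta (p_i) < \degree{p_i}$ solutions. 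For $\boldsymbol{\Delta x}_i = 0$: the equation becomes $(\alpha - c \beta) \cdot p_i (x_i) = \eta$, which has a unique solution if $\alpha \neq c \beta$ (using that $p_i$ is a permutation) and has either $0$ or $q$ solutions otherwise, hence at most $q$. Multiplying these per-coordinate bounds across $i$ yields the stated formula.

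The only mildly delicate point is the degenerate subcase $\boldsymbol{\Delta x}_i \neq 0$ with $\alpha = c \beta$, where cancellation of the leading term of $p_i$ would a priori spoil the degree-based counting; the hypothesis $\delta (p_i) < q$ together with \Cref{Lem: degree and differential uniformity} is precisely what rescues the bound $\degree{p_i}$ in this case.
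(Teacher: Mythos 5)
Your proposal is correct and follows essentially the same route as the paper: a bottom-to-top count exploiting the triangular structure, with the same case distinction on $\alpha$ versus $c\beta$ and on $\boldsymbol{\Delta x}_i$ being zero or not, and the same appeal to \Cref{Lem: degree and differential uniformity} to handle the degenerate subcase $\alpha = c\beta$, $\boldsymbol{\Delta x}_i \neq 0$. Your treatment is, if anything, slightly more explicit than the paper's about where $\alpha$ and $\beta$ come from and about the base case when $\boldsymbol{\Delta x}_n = 0$.
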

    \begin{proof}
        Suppose we are given the differential equation
        \begin{equation}\label{Equ: differential equation 1}
            \mathcal{F} (\mathbf{x} + \boldsymbol{\Delta x}) - c \cdot \mathcal{F} (\mathbf{x}) = \boldsymbol{\Delta y},
        \end{equation}
        Then, the last component of the differential equation only depends on the variable $x_n$, i.e.,
        \begin{equation*}
            p_n (x_n + \boldsymbol{\Delta x}_n) - c \cdot p_n (x_n) = \boldsymbol{\Delta y}_n.
        \end{equation*}
        This polynomial always has the leading monomial $(1 - c) \cdot x^{\degree{p_n}}$, therefore we can have at most $\degree{p_n}$ many solutions for $x_n$ in $\Fq$.

        Now suppose we have a solution for the last component, say $\hat{x}_n \in \Fq$.
        Then, we can substitute it in \Cref{Equ: differential equation 1} into the $(n - 1)$\textsuperscript{th} component
        \begin{equation*}
            f_{n - 1} (x_{n - 1} + \boldsymbol{\Delta x}_{n - 1}, \hat{x}_n + \boldsymbol{\Delta x}_n) - c \cdot f_{n - 1} (x_{n - 1}, \hat{x}_n) = \boldsymbol{\Delta y}_{n - 1}.
        \end{equation*}
        Since $\hat{x}_n$ is a field element we can reduce this equation to
        \begin{equation}\label{Equ: differential equation 2}
            \alpha \cdot p_{n - 1} (x_{n - 1} + \boldsymbol{\Delta x}_{n - 1} ) - c \cdot \beta \cdot p_{n - 1} (x_{n - 1}) + \gamma = \boldsymbol{\Delta y}_{n - 1},
        \end{equation}
        where $\alpha, \beta, \gamma \in \Fq$ and $\alpha, \beta \neq 0$.
        Now we have to do a case distinction on the various case for $\alpha$, $\beta$, and $\boldsymbol{\Delta x}_{n - 1}$.
        \begin{itemize}
            \item For $\alpha \neq c \cdot \beta$, we obtain a polynomial with leading monomial $(\alpha - c \cdot \beta) \cdot x^{\degree{p_{n - 1}}}$, therefore we can have at most $\degree{p_{n - 1}}$ many solutions for $x_{n - 1}$ in $\Fq$.

            \item For $\alpha = c \cdot \beta$, we obtain a rescaled differential equation for $p_{n - 1}$.
            \begin{itemize}
                \item If $\boldsymbol{\Delta x}_{n - 1} = 0$, then the equation becomes constant, so it can have at most $q$ many solutions for $x_{n - 1}$.

                \item If $\boldsymbol{\Delta x}_{n - 1} \neq 0$, then we obtain a proper differential equation for $p_{n - 1}$, so by \Cref{Lem: degree and differential uniformity} we can have at most $\degree{p_{n - 1}}$ many solutions for $x_{n - 1}$ in $\Fq$.
            \end{itemize}
        \end{itemize}
        Since in general we do not know which case for $\alpha$ and $\beta$ applies, we have to estimate the number of solutions for $\boldsymbol{\Delta x}_{n - 1} = 0$ by $q$, since we could end up with a constant equation.
        On the other, hand if $\boldsymbol{\Delta x}_{n - 1} \neq 0$ all possible cases are non-trivial and can have at most $\degree{p_{n - 1}}$ many solutions.

        Inductively, we now work upwards through the branches to derive the claim.
    \end{proof}

    Let $\wt: \Fqn \to \mathbb{Z}$ denote the Hamming weight of a vector, i.e.\ the number of non-zero entries of a vector.
    Moreover, let $\mathbf{a} \in \Fqn$, then we denote the restriction of $\mathbf{a}$ to its first $k$ entries by $\mathbf{a} \vert^k$.
    \begin{cor}\label{Cor: differential uniformity}
        Let $\Fq$ be a finite field, let $n \geq 1$ be an integer, let $c \in \Fq \setminus \{ 0, 1 \}$, and let $\mathcal{F}: \Fqn \to \Fqn$ be a GTDS.
        Let $p_1, \dots, p_n \in \Fq [x] / (x^q - x)$ be the univariate permutation polynomials of the GTDS $\mathcal{F}$, and let $\boldsymbol{\Delta x}, \boldsymbol{\Delta y} \in \Fqn$.
        If for all $1 \leq i \leq n$ one has that $1 < \degree{p_i} \leq d$ and $\delta (p_i) < q$, then
        \begin{equation*}
            {}_{c} \delta_\mathcal{F} (\boldsymbol{\Delta x}, \boldsymbol{\Delta y}) \leq d \cdot q^{n - 1 - \wt \big( \boldsymbol{\Delta x} \vert^{n - 1} \big)} \cdot d^{\wt \big( \boldsymbol{\Delta x} \vert^{n - 1} \big)}.
        \end{equation*}
        In particular,
        \begin{equation*}
            \prob \left[ \mathcal{F} \! : \boldsymbol{\Delta x} \to_{{}_{c} \delta} \boldsymbol{\Delta y} \right] \leq \frac{d}{q} \cdot \left( \frac{d}{q} \right)^{\wt \big( \boldsymbol{\Delta x} \vert^{n - 1} \big)}.
        \end{equation*}
    \end{cor}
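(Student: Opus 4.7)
The plan is to treat this as a direct specialisation of \Cref{Th: c-differential distribution of GTDS} combined with a weight-counting argument, so no new ideas beyond the degree hypothesis are needed.

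First, I would invoke \Cref{Th: c-differential distribution of GTDS}, which gives
\begin{equation*}
    {}_{c} \delta_\mathcal{F} ( \boldsymbol{\Delta x}, \boldsymbol{\Delta y} ) \leq \degree{p_n} \cdot \prod_{i = 1}^{n - 1}
    \begin{cases}
        \degree{p_i}, & \boldsymbol{\Delta x}_i \neq 0, \\
        q, & \boldsymbol{\Delta x}_i = 0.
    \end{cases}
\end{equation*}
This requires only that $\delta(p_i) < q$ for each $i$, which is one of our hypotheses. The hypothesis $\degree{p_i} \leq d$ now lets me replace $\degree{p_n}$ by $d$ and every ``non-zero difference'' factor $\degree{p_i}$ by $d$; the ``zero difference'' factors remain equal to $q$.

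Next, I would count: by definition of $\wt \big( \boldsymbol{\Delta x} \vert^{n - 1} \big)$, the number of indices $i \in \{1, \dots, n-1\}$ at which $\boldsymbol{\Delta x}_i \neq 0$ is exactly $w := \wt \big( \boldsymbol{\Delta x} \vert^{n - 1} \big)$, while the remaining $n - 1 - w$ indices contribute a factor of $q$ each. Grouping these factors yields
\begin{equation*}
    {}_{c} \delta_\mathcal{F} ( \boldsymbol{\Delta x}, \boldsymbol{\Delta y} ) \leq d \cdot d^{w} \cdot q^{n - 1 - w},
\end{equation*}
which is precisely the first inequality of the corollary.

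For the ``in particular'' statement, the probability in question is the fraction of inputs $\mathbf{x} \in \Fqn$ satisfying $\mathcal{F}(\mathbf{x} + \boldsymbol{\Delta x}) - c \cdot \mathcal{F}(\mathbf{x}) = \boldsymbol{\Delta y}$, i.e.\ ${}_{c} \delta_\mathcal{F} ( \boldsymbol{\Delta x}, \boldsymbol{\Delta y} ) / q^n$. Dividing the just-derived bound by $q^n$ and collecting the factor $d/q$ outside the exponent in $w$ immediately yields the claimed probability bound. There is no real obstacle here; the only thing to verify carefully is the bookkeeping of the $w$ factors of $d$ versus the $n-1-w$ factors of $q$, and that the final $d$ in front comes from the last branch $p_n$, which has no associated ``zero difference'' special case in \Cref{Th: c-differential distribution of GTDS}.
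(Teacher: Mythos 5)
Your proposal is correct and follows the same route as the paper's own (much terser) proof: specialise \Cref{Th: c-differential distribution of GTDS} using $\degree{p_i} \leq d$, count the zero versus non-zero coordinates of $\boldsymbol{\Delta x} \vert^{n-1}$ via the Hamming weight, and divide by $q^n$ for the probability bound. The bookkeeping of the $w$ factors of $d$ and $n-1-w$ factors of $q$ checks out exactly.
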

    \begin{proof}
        The first claim follows from \Cref{Lem: degree and differential uniformity} and \Cref{Th: c-differential distribution of GTDS}, the bound for the probability follows from the first and division by $q^n$.
    \end{proof}

    \subsection{c-Boomerang Uniformity Of A Class Of Generalized Triangular Dynamical Systems}
    Utilizing the generalized boomerang equation, we can also estimate the $c$-boomerang uniformity of a GTDS provided that $h_i = 0$ for all $i$.
    \begin{thm}\label[thm]{Th: c-boomerang uniformity of GTDS}
        Let $\Fq$ be a finite field of characteristic $p$, let $n \geq 1$ be an integer, and let $\mathcal{F}: \Fqn \to \Fqn$ be a GTDS such that for all $1 \leq i \leq n$
        \begin{enumerate}[label=(\roman*)]
            \item $p_i (x) = x^{d_i}$ with $\gcd \left( d_i, q - 1\right) = 1$ and $p \nmid d_i$, and

            \item $h_i = 0$.
        \end{enumerate}
        Let $\mathbf{a}, \mathbf{b} \in \Fqn \setminus \{ \mathbf{0} \}$, and let $c \in \Fqx$.
        Then the $c$-boomerang uniformity table of $\mathcal{F}$ at $\mathbf{a}$ and $\mathbf{b}$ is bounded by
        \begin{equation*}
            {}_{c} \mathcal{B}_\mathcal{F} (\mathbf{a}, \mathbf{b}) \leq
            \begin{rcases}
                \begin{dcases}
                    1, & d_n = 1,\ c \neq 1, \\
                    q, & d_n = 1,\ c = 1, \\
                    q, & a_n \cdot b_n = 0, \\
                    d_n^2, &
                    \begin{cases}
                        d_n > 1,\\
                        a_n \cdot b_n \neq 0
                    \end{cases}
                \end{dcases}
            \end{rcases}
            \cdot \prod_{i = 1}^{n - 1}
            \begin{dcases}
                q, & d_i = 1, \\
                q, & a_i \cdot b_i = 0, \\
                d_i^2, &
                \begin{cases}
                    d_i > 1,\\
                    a_i \cdot b_i \neq 0.
                \end{cases}
            \end{dcases}
        \end{equation*}
    \end{thm}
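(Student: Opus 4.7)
The plan is to exploit the triangular shape of the GTDS and bound the number of boomerang solutions component by component, processing from index $n$ down to $1$. At each stage a partial solution at higher indices is substituted into the pair of $c$-boomerang equations at index $i$, leaving a two-variable system in $(x_i,y_i)$ to which the results already established in the paper can be applied; multiplying the per-component bounds yields the claimed product.

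Since $h_i = 0$, the $i$-th component of $\mathcal{F}$ is $x_i^{d_i}\cdot g_i(x_{i+1},\dots,x_n)$ for $i<n$ and $x_n^{d_n}$ for $i=n$. After substituting any previously chosen values $(\hat x_j,\hat y_j)_{j>i}$, the pair of boomerang equations at index $i$ (for $i < n$) collapses to
\begin{align*}
\alpha\cdot(x_i+y_i)^{d_i} - c\beta\cdot x_i^{d_i} &= b_i,\\
c\gamma\cdot(x_i+y_i+a_i)^{d_i} - \delta\cdot(x_i+a_i)^{d_i} &= c\cdot b_i,
\end{align*}
where $\alpha,\beta,\gamma,\delta$ are the values of $g_i$ at appropriate shifts of $(\hat x_{i+1},\dots,\hat x_n)$; these lie in $\Fqx$ by the no-zeros hypothesis on $g_i$. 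For $i=n$ the pair is just the standard $c$-boomerang equation system for the monomial $x^{d_n}$ in the variables $(x_n,y_n)$.

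The per-component bound then follows from a case analysis. When $i<n$, $d_i>1$, and $a_ib_i\neq 0$, I divide the first equation by $\alpha$, the second by $c\gamma$, and substitute $z=(x_i+y_i)/a_i$ and $x=x_i/a_i$; a short calculation puts the system in the form of a generalized boomerang equation system $\mathcal{F}(d_i,c_1,c_2,c_3,1,c_5)$ with all five constants in $\Fqx$ (the non-vanishing of $c_2$ and $c_5$ uses $b_i\neq 0$), so \Cref{Th: number of solutions of generalized boomerang equation} gives at most $d_i^2$ solutions. When instead $d_i=1$ or $a_ib_i=0$, I fall back on the trivial bound $q$: for each $x_i\in\Fq$ the first equation reads $(x_i+y_i)^{d_i}=(c\beta x_i^{d_i}+b_i)/\alpha$, and since $x\mapsto x^{d_i}$ is a bijection on $\Fq$ this determines $x_i+y_i$, hence $y_i$, uniquely. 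For the last component with $d_n>1$ and $a_nb_n\neq 0$, \Cref{Cor: boomerang uniformity of monomials} gives $d_n^2$ directly; for $d_n=1$ and $c\neq 1$, a direct linear elimination in $(x_n,y_n)$ yields at most one solution; the remaining subcases are again covered by the trivial bound $q$.

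Multiplying these per-component bounds produces exactly the claimed product, since each admissible tuple of higher-index values multiplies the count at level $i$ independently. The main obstacle is the bookkeeping: one has to verify that the constants $c_1,c_2,c_3,c_5$ produced by the rescaling are all nonzero under the hypothesis $a_ib_i\neq 0$ so that \Cref{Th: number of solutions of generalized boomerang equation} applies, and one has to treat each edge case ($d_i=1$, $a_i=0$, $b_i=0$, or the last component with $c=1$ versus $c\neq 1$) carefully so that each is assigned the correct local bound and the resulting product matches the statement.
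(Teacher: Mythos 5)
Your proposal is correct and follows essentially the same route as the paper: peel off components from index $n$ downward using the triangular structure, reduce each stage to a two-variable system with nonzero coefficients coming from the zero-free $g_i$, rescale into a generalized boomerang equation system when $d_i>1$ and $a_ib_i\neq 0$ so that \Cref{Th: number of solutions of generalized boomerang equation} applies, and fall back on the bound $q$ (justified by bijectivity of $x\mapsto x^{d_i}$) or the linear/linearized analysis in the degenerate cases. The only cosmetic difference is that you work in $(x_i,y_i)$ rather than $(z_i,x_i)$ and justify the trivial bound $q$ slightly more explicitly than the paper does.
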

    \begin{proof}
        Suppose we are given the boomerang equation system
        \begin{equation}\label{Equ: boomerang equations GTDS}
            \begin{split}
                \mathcal{F} (\mathbf{z}) &- c \cdot \mathcal{F} (\mathbf{x}) = \mathbf{b}, \\
                c \cdot \mathcal{F} (\mathbf{z} + \mathbf{a}) &- \mathcal{F} (\mathbf{x} + \mathbf{a}) = c \cdot \mathbf{b}.
            \end{split}
        \end{equation}
        Then, the last components of the boomerang equations only depend on the variable $x_n$, i.e.,
        \begin{align*}
            p_n (z_n) &- c \cdot p_n (x_n) = b_n, \\
            c \cdot p_n (z_n + a_n) &- p_n (x_n + a_n) = c \cdot b_n.
        \end{align*}
        Now we do a case distinction.
        \begin{itemize}
            \item If $\degree{p_n} = 1$, then we can apply \Cref{Lem: boomerang uniformity of linearized polynomials}, so depending on $c$ we can either have $q$ solutions or an unique solution for $(z_n, x_n)$.

            \item If $\degree{p_n} > 1$ and $a_n \cdot b_n = 0$, then the boomerang equations for $p_n$ might not be fully determined, so in this case we can have at most $q$ many solutions for $(z_n, x_n)$.

            \item If $\degree{p_n} > 1$ and $a_n \cdot b_n \neq 0$, then by \Cref{Cor: boomerang uniformity of monomials} we have at most $d_n^2$ solutions for $(z_n, x_n)$.
        \end{itemize}

        Now suppose we have a solution for the last component, say $(\hat{z}_n, \hat{x}_n) \in \Fq^2$.
        Then, we can substitute it in \Cref{Equ: boomerang equations GTDS} into the $(n - 1)\textsuperscript{th}$ component
        \begin{align*}
            f_{n - 1} (z_{n - 1}, \hat{z}_{n - 1}) &- c \cdot f_{n - 1} (x_{n - 1}, \hat{x}_n) = b_{n - 1}, \\
            c \cdot f_{n - 1} (z_{n - 1} + a_{n - 1}, \hat{z}_n + a_n) &- f_{n - 1} (x_{n - 1}, \hat{x}_n) = c \cdot b_{n - 1}.
        \end{align*}
        Since $\hat{z}_n$, $\hat{x}_n$ are field elements we can reduce these equations to
        \begin{align*}
            \alpha \cdot p_{n - 1} (z_{n - 1}) &- c \cdot \beta \cdot p_{n - 1} (x_{n - 1}) = b_{n - 1}, \\
            c \cdot \gamma \cdot p_{n - 1} (z_{n - 1} + a_{n - 1}) &- \delta \cdot p_{n - 1} (x_{n - 1} + a_{n - 1}) = c \cdot b_{n - 1},
        \end{align*}
        where $\alpha, \beta, \gamma, \delta \in \Fqx$.
        Now we have to do a case distinction on the possible cases for $a_{n - 1}, b_{n - 1}$ and $\degree{p_{n - 1}}$.
        \begin{itemize}
            \item If $\degree{p_{n - 1}} = 1$, then in principle it might happen that both equations become constant and that the left-hand sides coincide with the right-hand sides.
            Then the $c$-boomerang uniformity is maximal, i.e.\ we have to bound it by $q$.

            \item If $a_{n - 1} \cdot b_{n - 1} = 0$, then the equation system might not be fully determined.
            In that case we have to use the trivial upper bound $q$.

            \item If $a_{n - 1} \cdot b_{n - 1} \neq 0$, then we divide both equations by $a_{n - 1}^d$, the first equation by $\alpha$ and the second one by $c \cdot \gamma$.
            After a substitution analog to \Cref{Equ: substitution}, we obtain a generalized boomerang equation system with the coefficients
            \begin{equation*}
                c_1 = \frac{c \cdot \beta}{\alpha}, \qquad
                c_2 = \frac{b_{n - 1}}{\alpha \cdot a_{n - 1}^{d_{n - 1}}}, \qquad
                c_3 = \frac{\delta}{c \cdot \gamma}, \qquad
                c_4 = 1, \qquad
                c_5 = \frac{b_{n - 1}}{\gamma \cdot a_{n - 1}^{d_{n - 1}}}.
            \end{equation*}
            So by \Cref{Th: number of solutions of generalized boomerang equation} the generalized boomerang equation system $\mathcal{F} (d_{n - 1}, c_1, \allowbreak \dots, \allowbreak c_5)$ has at most $d_{n - 1}^2$ many solutions in $\Fq^2$.
        \end{itemize}
        Inductively, we now work upwards through the branches to derive the claim.
    \end{proof}

    Let $\mathbf{a}, \mathbf{b} \in \Fqn$, then we denote their Hadamard product with
    \begin{equation}
        \mathbf{a} \odot \mathbf{b} = (a_i)_{1 \leq i \leq n} \odot (b_i)_{1 \leq i \leq n} = (a_i \cdot b_i)_{1 \leq i \leq n},
    \end{equation}
    i.e.\ their pointwise product.
    Utilizing this notation we have a convenient representation of the theorem.
    \begin{cor}\label[cor]{Cor: c-boomerang probability}
        Let $\Fq$ be a finite field of characteristic $p$, let $n \geq 1$ be an integer, and let $\mathcal{F}: \Fqn \to \Fqn$ be a GTDS such that all $1 \leq i \leq n$
        \begin{enumerate}[label=(\roman*)]
            \item $p_i (x) = x^{d_i}$ with $\gcd \left( d_i, q - 1\right) = 1$ and $p \nmid d_i$, and

            \item $h_i = 0$.
        \end{enumerate}
        Let $\mathbf{a}, \mathbf{b} \in \Fqn$, and let $c \in \Fqx$.
        If for all $1 \leq i \leq n$ one has that $1 < d_i \leq d$, then
        \begin{equation*}
            {}_{c} \mathcal{B}_\mathcal{F} (\mathbf{a}, \mathbf{b}) \leq q^{n - \wt (\mathbf{a} \odot \mathbf{b})} \cdot d^{2 \cdot \wt (\mathbf{a} \odot \mathbf{b})}.
        \end{equation*}
        In particular,
        \begin{equation*}
            \prob \left[ \mathcal{F} \!: \mathbf{a} \to_{{}_{c} \mathcal{B}} \mathbf{b} \right] \leq \left( \frac{d^2}{q} \right)^{\wt (\mathbf{a} \odot \mathbf{b})}.
        \end{equation*}
    \end{cor}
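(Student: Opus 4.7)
The plan is to read off both inequalities from \Cref{Th: c-boomerang uniformity of GTDS} by keeping track of which factors in its product get activated under the additional hypothesis $1 < d_i \leq d$.

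Under that hypothesis, none of the branches requiring $d_i = 1$ are triggered, so the theorem assigns to each index $i \in \{1, \dots, n\}$ (including $i = n$) a factor equal to $q$ when $a_i b_i = 0$ and equal to $d_i^2 \leq d^2$ when $a_i b_i \neq 0$. By definition of the Hadamard product and the Hamming weight, there are exactly $\wt(\mathbf{a} \odot \mathbf{b})$ indices of the second type and $n - \wt(\mathbf{a} \odot \mathbf{b})$ indices of the first type. Multiplying yields
\begin{equation*}
    {}_{c} \mathcal{B}_\mathcal{F}(\mathbf{a}, \mathbf{b}) \leq q^{n - \wt(\mathbf{a} \odot \mathbf{b})} \cdot d^{2 \wt(\mathbf{a} \odot \mathbf{b})},
\end{equation*}
which is the first inequality. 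Dividing by $q^n$ and cancelling the factors of $q$ then collapses the bound to $(d^2/q)^{\wt(\mathbf{a} \odot \mathbf{b})}$, giving the second.

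The only technicality is that \Cref{Th: c-boomerang uniformity of GTDS} is stated for $\mathbf{a}, \mathbf{b} \in \Fqn \setminus \{\mathbf{0}\}$, whereas the corollary imposes no such restriction. When $\mathbf{a} = \mathbf{0}$ or $\mathbf{b} = \mathbf{0}$ one has $\wt(\mathbf{a} \odot \mathbf{b}) = 0$ and the target upper bound reduces to $q^n$; this is straightforward to verify directly from \Cref{Def: boomerang uniformity}, since eliminating $\mathcal{F}(\mathbf{x} + \mathbf{y})$ between the two boomerang equations yields $(c^2 - 1)\mathcal{F}(\mathbf{x}) = \mathbf{0}$, which either pins down $\mathbf{x}$ uniquely (and then $\mathbf{y}$ is determined by a single equation) or reduces the system to the first boomerang equation alone, which admits at most one $\mathbf{y}$ per choice of $\mathbf{x}$. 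The main (and only) obstacle is this edge-case check; the body of the corollary is a direct bookkeeping consequence of \Cref{Th: c-boomerang uniformity of GTDS}.
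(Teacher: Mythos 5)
Your proof is correct and follows essentially the same route as the paper's one-line argument: read the bound off \Cref{Th: c-boomerang uniformity of GTDS} (the hypothesis $1 < d_i \leq d$ kills the $d_i = 1$ branches, and the remaining factors are counted by $\wt(\mathbf{a}\odot\mathbf{b})$), then divide by $q^n$ for the probability statement. Your extra check for $\mathbf{a} = \mathbf{0}$ or $\mathbf{b} = \mathbf{0}$ — where the theorem's hypothesis fails but the bound degenerates to $q^n$, which holds because the first boomerang equation determines $\mathbf{y}$ from $\mathbf{x}$ for any permutation $\mathcal{F}$ — is a legitimate edge case that the paper's proof leaves implicit, and handling it is a small but welcome addition rather than a deviation in method.
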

    \begin{proof}
        The first claim follows from \Cref{Th: c-boomerang uniformity of GTDS}, the bound for the probability follows from the first and division by $q^n$.
    \end{proof}

    Of course, we would also like to estimate the $c$-boomerang uniformity for non-zero $h_i$'s.
    Though, this significantly complicates our estimation approach.
    For illustration, let us consider
    \begin{equation}
        \begin{split}
            f_1 (x_1, x_2) &= x_1^{d_1} \cdot g_1 (x_1) + h_1 (x_1), \\
            f_2 (x_2) &= x_2^{d_2},
        \end{split}
    \end{equation}
    where $d_1, d_2 \in \mathbb{Z}_{> 1}$ induce permutations over $\Fq$ and $h_1 \in \Fq [x]$ is non-constant.
    Let $\mathbf{a}, \mathbf{b} \in \Fq^2$, for $a_2 \neq 0$ the $c$-boomerang equations have at most $d_2^2$ many solutions.
    Further, the $c$-boomerang equations for $x_1$ become
    \begin{equation}\label{Equ: non-trivial h_i}
        \begin{split}
            z_1^{d_1} \cdot g_1 (z_2) - c \cdot x_1^{d_1} \cdot g_1 (x_2) &= b_1 + c \cdot h_1 (x_2) - h_1 (z_2), \\
            c \cdot (z_1 + a_1)^{d_1} \cdot g_1 (z_2 + a_2) &- (x_1 + a_1)^{d_1} \cdot g_1 (x_2 + a_2) \\ &= c \cdot b_1 + h_1 (x_2 + a_2) - c \cdot h_1 (z_2 + a_2).
        \end{split}
    \end{equation}
    Therefore, to estimate the number of solutions of this equation via \Cref{Th: number of solutions of generalized boomerang equation}, we in addition have to show that the polynomial systems
    \begin{equation}\label{Equ: first h_i equation}
        \begin{split}
            z_2^{d_2} - c \cdot x_2^{d_2} &= b_2, \\
            c \cdot (z_2 + a_2)^{d_2} - (x_2 + a_2)^{d_2} &= c \cdot b_2, \\
            h_1 (z_2) - c \cdot h_1 (x_2) &= b_1,
        \end{split}
    \end{equation}
    and
    \begin{equation}\label{Equ: second h_i equation}
        \begin{split}
            z_2^{d_2} - c \cdot x_2^{d_2} &= b_2, \\
            c \cdot (z_2 + a_2)^{d_2} - (x_2 + a_2)^{d_2} &= c \cdot b_2, \\
            c \cdot h_1 (z_2 + a_2) - h_1 (x_2 + a_2) &= c \cdot b_1,
        \end{split}
    \end{equation}
    do not admit any solutions in $\Fq$.
    We can view these polynomial systems as intersection of three plane curves, though a priori there is no reason why the curves coming from $h_1$ are not generated by the two other ones.
    For example for $h_1 (x) = x^{d_2}$ and $b_1 = b_2$, the $h_1$ polynomials in \Cref{Equ: first h_i equation,Equ: second h_i equation} do not add any new information at all, moreover for that parameter choice the right-hand sides of \Cref{Equ: non-trivial h_i} both become zero, so we cannot use \Cref{Th: number of solutions of generalized boomerang equation} to estimate the number of solutions.

    \subsubsection{The Boomerang Uniformity Of Two Consecutive Block Cipher Rounds}
    We finish this paper with a short discussion how to apply \Cref{Cor: c-boomerang probability} to a GTDS-based block cipher.
    Let $\Fq$ be a finite field, let $n \geq 1$ be an integer, let $\mathcal{F}: \Fqn \to \Fqn$ be a GTDS, let $\mathbf{A} \in \Fqnxn$ be an invertible matrix, and let $\mathbf{c}_i \in \Fqn$ be a constant.
    Then we define the $i$\textsuperscript{th} round function as
    \begin{equation}\label{Equ: round function}
        \begin{split}
            \mathcal{R}^{(i)} : \Fqn \times \Fqn &\to \Fqn, \\
            (\mathbf{x}, \mathbf{k}_i) &\mapsto \mathbf{A} \mathcal{F} (\mathbf{x}) + \mathbf{c}_i + \mathbf{k}_i,
        \end{split}
    \end{equation}
    where $\mathbf{k}_i$ denotes the $i$\textsuperscript{th} round key.
    For an integer $r \geq 1$ a block cipher $\mathcal{C}$ is then simply the $r$-fold composition of round functions
    \begin{equation}
        \begin{split}
            \mathcal{C}: \Fqn \times \left( \Fqn \right)^{r + 1} &\to \Fqn, \\
            (\mathbf{x}, \mathbf{k}_0, \dots, \mathbf{k}_r) &\mapsto \mathcal{R}^{(r)} \circ \cdots \circ \mathcal{R}^{(1)} (\mathbf{x} + \mathbf{k}_0),
        \end{split}
    \end{equation}
    where composition is taken with respect to the plain text variable $\mathbf{x}$.
    By \Cref{Rem: original definition} the $c$-BCT entry at $\mathbf{a}, \mathbf{b} \in \Fqn$ for two consecutive rounds of a block cipher is then given by the number of solutions of
    \begin{equation}
        {\mathcal{R}^{(i + 1)}}^{-1} \left( c^{-1} \cdot \mathcal{R}^{(i)} (\mathbf{x} + \mathbf{a}) + \mathbf{b} \right) - {\mathcal{R}^{(i + 1)}}^{-1} \left( c \cdot \mathcal{R}^{(i)} (\mathbf{x}) + \mathbf{b} \right) = \mathbf{a}.
    \end{equation}
    Performing the substitution $\mathbf{z} = {\mathcal{R}^{(i + 1)}}^{-1} \left( c \cdot \mathcal{R}^{(i)} (\mathbf{x}) + \mathbf{b} \right)$ analog to the proof of \cite[Theorem~4]{Stanica-cBoomerang} we obtain the equations
    \begin{equation}
        \begin{split}
            \mathcal{R}^{(i + 1)} (\mathbf{z}) - c \cdot \mathcal{R}^{(i)} (\mathbf{x}) &= \mathbf{b}, \\
            c \cdot \mathcal{R}^{(i + 1)} (\mathbf{z} + \mathbf{a}) - \mathcal{R}^{(i)} (\mathbf{x} + \mathbf{a}) &= c \cdot \mathbf{b}.
        \end{split}
    \end{equation}
    Substituting our definition of round functions, see \Cref{Equ: round function}, into these equations we obtain after rearranging
    \begin{equation}
        \begin{split}
            \mathcal{F} (\mathbf{z}) - \mathcal{F} (\mathbf{x}) &= \mathbf{A}^{-1} \big( \mathbf{b} + c \cdot \mathbf{c}_i - \mathbf{c}_{i + 1} + (c - 1) \cdot (\mathbf{k}_i - \mathbf{k}_{i + 1}) \big), \\
            c \cdot \mathcal{F} (\mathbf{z} + \mathbf{a}) - \mathcal{F} (\mathbf{x} + \mathbf{a}) &= \mathbf{A}^{-1} \big( c \cdot \mathbf{b} + \mathbf{c}_i - c \cdot \mathbf{c}_{i + 1} + (c - 1) \cdot (\mathbf{k}_i - \mathbf{k}_{i + 1}) \big).
        \end{split}
    \end{equation}
    In particular, if $c = 1$, $\mathbf{k}_i = \mathbf{k}$ for all $1 \leq i \leq r$ and $\mathcal{F}$ satisfies the assumptions of \Cref{Cor: c-boomerang probability}, then the corollary provides a convenient tool to estimate the boomerang uniformity of two rounds of a block cipher.

    E.g., let us consider \Hades \cite{EC:GLRRS20} without a key schedule, or its derived hash functions \Poseidon \cite{USENIX:GKRRS21} and \Poseidontwo \cite{Poseidon2}.
    Let $d, p \in \mathbb{Z}_{> 1}$ be such that $p$ is a prime and $d$ is the smallest integer such that $\gcd \left( d, p - 1 \right) = 1$, the \Hades permutation over $\Fpn$ is divided into $2 \cdot r_f$ outer ``full'' SPN rounds and $r_p$ inner ``partial'' SPN rounds
    \begin{equation}
        \Hades_{\pi} (\mathbf{x}) = \mathcal{R}_f \circ \mathcal{R}_p \circ \mathcal{R}_f (\mathbf{x}),
    \end{equation}
    where $\mathcal{R}_f$ is the composition of $r_f$ round functions instantiated with the full SPN
    \begin{equation}
        \mathcal{S}:
        \begin{pmatrix}
            x_1 \\ \vdots \\ x_n
        \end{pmatrix}
        \mapsto
        \begin{pmatrix}
            x_1^d \\ \vdots \\ x_n^d
        \end{pmatrix}
        ,
    \end{equation}
    and $\mathcal{R}_p$ is the composition of $r_p$ round functions instantiated with the partial SPN
    \begin{equation}
        \mathcal{P}:
        \begin{pmatrix}
            x_1 \\ \vdots \\ x_n
        \end{pmatrix}
        \mapsto
        \begin{pmatrix}
            x_1^d \\ x_2 \\ \vdots \\ x_n
        \end{pmatrix}
        .
    \end{equation}
    Trivially, the full and the partial SPN are GTDS instances.
    Moreover, the full SPN $\mathcal{S}$ satisfies the conditions of \Cref{Cor: c-boomerang probability}.
    Therefore, with our results one can estimate the resistance of the full SPN layer of \Hades, \Poseidon and \Poseidontwo against boomerang cryptanalysis.

    \section{Conclusion}
    In this paper we proved that the $c$-boomerang uniformity of permutation monomials $x^d$ and $x^\frac{1}{d}$, where $d$ is not divisible by the characteristic $p$, is bounded by $d^2$.
    We also applied this bound to estimate the $c$-BCT table entries for a class of vector-valued cryptographic permutations, among them the well-known Substitution-Permutation Network.
    In the future, we hope to see refinements of our methods that allow to estimate the $c$-BCT entries of a full GTDS.


    %
    %

    \bibliographystyle{amsplain}
    \bibliography{abbrev0.bib,crypto.bib,literature.bib}

\end{document}